\renewcommand{\p@subfigure}{\thefigure}
\definecolor{lightgray}{rgb}{0.9,0.9,0.9} %
\newtheorem{thm}{Theorem}[section]
\newtheorem{theorem}{Theorem}[section]
\newtheorem{lem}{Lemma}
\renewenvironment{proof}[1][\proofname]{%
  \par\pushQED{\qed}\normalfont%
  \topsep6\p@\@plus6\p@\relax
\trivlist\item[\hskip\labelsep\bfseries#1\@addpunct{.}]%
  \ignorespaces
}{%
  \popQED\endtrivlist\@endpefalse
}
\begin{document}
\begin{center}
    \textbf{\large SOLITONIC AND EXACT SOLUTIONS FOR A VISCOUS TRAFFIC FLOW MODEL VIA LIE SYMMETRY}
\end{center}
\vspace{0.5cm}
\begin{center}
\author[{\small \textbf{Urvashi Joshi}\textsuperscript{1}, \textbf{Aniruddha Kumar Sharma}\textsuperscript{2}, \textbf{Rajan Arora}\textsuperscript{3}\\ \textsuperscript{1,2,3} \text{Department of Applied Mathematics and Scientific Computing,}\\ \text{Indian Institute of Technology, Roorkee, India}}\\
\end{center}
\vspace{0.2cm}
\begin{center}
\textsuperscript{1}urvashi\textunderscore j@amsc.iitr.ac.in,
\textsuperscript{2}aniruddha\textunderscore s@as.iitr.ac.in, \\\textsuperscript{3}rajan.arora@as.iitr.ac.in.\textbf{(Corresponding Author)}
\end{center}
\section*{Abstract}
\qquad This work studies a macroscopic traffic flow model driven by a system of nonlinear hyperbolic partial differential equations. Using Lie symmetry analysis, we determine the infinitesimal generators and construct an optimal system of one-dimensional subalgebras, facilitating symmetry reductions for the governing system. In addition, we discussed the classical symmetry and solution of the traffic flow model with the initial conditions left invariant. By applying the method of nonlinear self-adjointness, conservation laws associated with the model are established and are utilized to obtain exact solutions. Using these exact solutions, we construct solitonic solutions, including kink-type, peakon-type, and parabolic solitons. Additionally, using the weak discontinuity C¹ wave illustrates nonlinear wave dynamics in traffic evolution. Moreover, we investigate how these solutions affect traffic behavior, clarifying shock wave development and flow stability. The results provide a basis for useful applications in traffic management, real-time traffic control, and intelligent transportation systems, as well as improving mathematical knowledge of traffic dynamics.

\textbf{Keywords:} Traffic flow model, Lie symmetry, Invariant initial condition, Exact and soliton solutions, Conservation laws, Self-adjointness, Weak discontinuities.  

\section{Introduction}
\qquad Mathematical modeling using partial differential equations (PDEs) has become an essential tool in science and engineering fields, allowing researchers to characterize and examine complex structures prevalent in materials science, quantum physics, and fluid dynamics. In transportation science, PDE-based models play a pivotal role in understanding and optimizing traffic flow, addressing pressing challenges like congestion, queue formation, and strategic management of road infrastructure. The demand for strong and scalable traffic flow models that guide urban planning and real-time management continues to increase as urbanization accelerates and transportation networks become more complicated.\par

Traffic flow modeling aims to capture the collective dynamics of vehicles on roadways, providing critical information to reduce congestion, design infrastructure, and assess environmental impact \cite{horvat2015traffic}. These models are generally classified into microscopic, mesoscopic, and macroscopic frameworks. While macroscopic models \cite{mohan2013state} treat traffic as a continuum, using fluid dynamics analogies to efficiently simulate large-scale traffic patterns and events such as shock waves and queue dissipation, microscopic models \cite{kanagaraj2013evaluation} focus on individual vehicle behaviors but are computationally intensive, mesoscopic models \cite{van2015genealogy,ferrara2018microscopic} use probabilistic approaches to balance detail and efficiency.
Among macroscopic models, the Aw-Rascle-Zhang (ARZ) model \cite{aw2000resurrection,aw2002derivation} stands out as a higher-order framework that extends the classical Lighthill-Whitham-Richards (LWR) model \cite{aw2002derivation,richards1956shock,lighthill1955kinematic} by incorporating both vehicle density and velocity, as well as driver relaxation and pressure effects. This makes the ARZ model extremely relevant for both theoretical studies and practical uses in real-time traffic management. It enables it to more accurately depict complicated traffic events, including momentum conservation, shock wave propagation, and stop-and-go oscillations.  

Recent studies \cite{paliathanasis2022lie,paliathanasis2023symmetry} have utilized Lie symmetry analysis to derive invariant solutions for traffic models, yet most focus on idealized or inviscid environments, neglecting realistic effects like viscosity and adaptation of driver behavior. Moreover, limited attention has been paid to solitonic traffic waves and the role of initial condition-preserving symmetries. This study addresses these gaps by applying Lie symmetry methods to a viscous macroscopic model, constructing exact solutions, analyzing weak discontinuities, and exploring solitonic dynamics, thus improving both the theoretical and practical understanding of traffic evolution.

The practical utility of macroscopic models is evident in their widespread adoption for city-scale traffic simulation, infrastructure evaluation, and the optimization of control strategies such as signal timings and ramp metering. These models are particularly valued for their computational efficiency, which does not deteriorate with increasing network size or vehicle count, and their ability to provide aggregate measures like average travel times, fuel consumption, and emissions.

In this work, we study a hydrodynamic macroscopic traffic flow model governed by a non-linear hyperbolic PDE system that incorporates vehicle density, average velocity, and traffic pressure. Reflecting the flexibility of drivers to adapt to changing traffic conditions, the model comprises a continuity equation for vehicle conservation and a momentum equation with relaxation and pressure factors. We apply Lie symmetry analysis \cite{bluman2010applications,olver1993applications} to determine infinitesimal generators and construct an optimal system of one-dimensional subalgebras, enabling systematic symmetry reductions and the derivation of exact invariant solutions. Direct approaches are used in obtaining conservation laws; therefore, the property of nonlinear self-adjointness \cite{hussain2024lie} is investigated to guarantee the physical significance of these laws. An in-depth understanding of nonlinear wave dynamics and the evolution of traffic patterns is given by using traveling wave transformations to build solitonic solutions, including kink, peakon, and parabola solitons.

This work develops theoretical knowledge of traffic flow dynamics and a useful toolbox accessible for congestion management and optimization of transportation systems by combining advanced mathematical methodologies with real-world modeling requirements. The results affect the design of intelligent transportation systems, the evolution of real-time traffic control methods, and the general implementation of applied mathematics in traffic engineering.
 
\subsection{Governing Model}
\qquad We consider a macroscopic traffic flow model described by a system of non-linear hyperbolic partial differential equations \cite{khoudari2024microscopic,shagolshem2024exact}. This hydrodynamic model treats traffic as a one-dimensional compressible fluid, where the key state variables are vehicle density and average velocity. The model consists of two primary equations:
\begin{itemize}
    \item \textbf{Continuity Equation:} This equation expresses the conservation of vehicles \cite{shagolshem2024exact} and is given by:
\end{itemize}
\begin{equation}\label{EC}
    \frac{\partial \rho}{\partial t}+\frac{\partial (\rho u)}{\partial x}=0,
\end{equation}
where $\rho(x, t)$ represents the vehicle density at position $x$ and time $t$, and $u(x, t)$ is the average velocity of vehicles at the same location and time.
\begin{itemize}
    \item \textbf{Equation of Motion:} This equation describes the dynamics of the average velocity, accounting for pressure gradients, and relaxation towards an equilibrium velocity \cite{paliathanasis2023symmetry, shagolshem2024exact}:
\end{itemize}
\begin{equation}\label{EM}
    \frac{\partial u}{\partial t}+u \frac{\partial u}{\partial x}=-\frac{1}{\rho}\frac{\partial P}{\partial x}
    +\frac{R}{\tau}(u_{e}(\rho)-u),
\end{equation}
where $P(x, t)$ represents the traffic pressure, $u_e(\rho)$ is the equilibrium velocity, $\tau$ is the characteristic relaxation time, and $R$ is a dimensionless relaxation coefficient. The relaxation term $\frac{R}{\tau}(u_{e}(\rho)-u)$ models the tendency of drivers to adjust their speed towards the equilibrium velocity $u_e(\rho)$ over time.
The traffic pressure, $P$, is defined in analogy with fluid mechanics as:
\begin{equation}
P = A\rho - D\frac{\partial u}{\partial x},
\end{equation}
where $A$ represents the speed variance, and $D$ represents viscosity, both considered constant parameters. Substitution of this expression for $P$ into the equation of motion yields:
\begin{equation}\label{EM2}
    \frac{\partial u}{\partial t}+u \frac{\partial u}{\partial x}+\frac{A}{\rho}\frac{\partial \rho}{\partial x}-\frac{D}{\rho}\frac{\partial^2 u}{\partial x^2}=
    \frac{R}{\tau}(u_{e}(\rho)-u).
\end{equation}

In the homogeneous case, we consider $R = 0$ or $u_e(\rho) = u$, representing an equilibrium situation where the traffic is in steady state.

The system of equations can be concisely written as:
\begin{equation}\label{TM}
 \begin{split}
&\mathbb{R}_{1}: \rho u_{x}+\rho_{x} u+ \rho_{t}=0\text{,}\\
&\mathbb{R}_{2}:  u_{t}+u u_{x}+\frac{A \rho_{x}}{\rho}-\frac{D u_{xx}}{\rho}=0\text{.}
\end{split}
\end{equation}

The model presented above is a viscous macroscopic model. When $D=0$, it simplifies to a non-viscous model. This macroscopic technique lets one analyze total traffic behavior by considering traffic flow as a compressible fluid, therefore offering an understanding of events, including shock wave generation and congestion patterns.
\subsection{Outline}
\qquad This research article is structured as follows. Section 2 defines the Lie symmetry analysis, the initial condition left invariant by classical symmetries, and the formulation of the optimal system. Section 3 addresses similarity reductions and invariant solutions. Section 4 examines the findings and their physical explanations. Section 5 analyzes non-linear self-adjointness and formulates conservation laws. Section 6 examines the progression of weak discontinuities. Section 7 obtains exact solutions via conservation laws. Section 8 proposed the application of this model. Finally, Section 9 concludes the study with key findings and future directions.
\section{Lie Symmetry and Optimal Subalgebras}
\subsection{Lie Symmetry Analysis}
\qquad We analyze a one-parameter Lie group of transformations with parameter $\varepsilon$ (where $\varepsilon << 1$) for the system \eqref{TM}.
\begin{equation}
\begin{split}
&x^{\star}=x+\varepsilon \mathscr{F}_{x}(x, t, \rho, u)+O\left(\varepsilon^{2}\right)\text{,}\\
&t^{\star}=t+\varepsilon \mathscr{F}_{t}(x, t, \rho, u)+O\left(\varepsilon^{2}\right)\text{,}\\
&\rho^{\star}=\rho+\varepsilon \mathscr{F}_{\rho}(x, t, \rho, u)+O\left(\varepsilon^{2}\right)\text{,}  \\
&u^{\star}=u+\varepsilon \mathscr{F}_{u}(x, t, \rho, u)+O\left(\varepsilon^{2}\right)\text{.} \end{split}
\end{equation}
Here, $\mathscr{F}_{x}$, $\mathscr{F}_{t}$, $\mathscr{F}_{\rho}$, and $\mathscr{F}_{u}$ represent the infinitesimals corresponding to the transformations of the independent variables $x$, $t$, and the dependent variables $\rho$, $u$, respectively.

\begin{theorem}
If we propose Lie point symmetry generators for the system \ref{TM} in the following form:
\begin{equation}
\mathbb{P} = \mathscr{F}_x(x,t,\rho,u)\frac{\partial}{\partial x} + \mathscr{F}_t(x,t,\rho,u)\frac{\partial}{\partial y} + \mathscr{F}_\rho(x,t,\rho,u)\frac{\partial}{\partial p} + \mathscr{F}_u(x,t,\rho,u)\frac{\partial}{\partial u},
\end{equation}
we have $\mathbb{P}$ satisfy the condition
\begin{equation}\label{PR}
    \operatorname{Pr}^{(2)}\mathbb{P}(\mathbb{R}_i)\big|_{\mathbb{R}_i=0} = 0, \hspace{0.1cm} \text{for} \hspace{0.1cm}  i=1,2,
\end{equation}
where $\operatorname{Pr}^{(2)}\mathbb{P}$ is the second prolongation of $\mathbb{P}$, $\mathbb{R}_{1}=\rho u_{x}+\rho_{x} u+ \rho_{t}=0$, and 
$\mathbb{R}_{2}=u_{t}+u u_{x}+\frac{A \rho_{x}}{\rho}-\frac{D u_{xx}}{\rho}$.
\end{theorem}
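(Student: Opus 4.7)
The plan is to establish the invariance criterion \eqref{PR} as a direct consequence of requiring that the one-parameter group generated by $\mathbb{P}$ map solutions of the system \eqref{TM} to solutions. The standard strategy in Lie's framework is: (i) extend $\mathbb{P}$ to the second jet space (since $\mathbb{R}_2$ contains the derivative $u_{xx}$, second prolongation is the lowest order that suffices); (ii) impose that the infinitesimal action of $\operatorname{Pr}^{(2)}\mathbb{P}$ vanishes on each component $\mathbb{R}_i$ whenever $\mathbb{R}_1=\mathbb{R}_2=0$; (iii) translate this into the determining system for the infinitesimals $\mathscr{F}_x,\mathscr{F}_t,\mathscr{F}_\rho,\mathscr{F}_u$.

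First I would write down the explicit second prolongation
\begin{equation*}
\operatorname{Pr}^{(2)}\mathbb{P} = \mathbb{P} + \mathscr{F}^{t}_{\rho}\,\partial_{\rho_t} + \mathscr{F}^{x}_{\rho}\,\partial_{\rho_x} + \mathscr{F}^{t}_{u}\,\partial_{u_t} + \mathscr{F}^{x}_{u}\,\partial_{u_x} + \mathscr{F}^{xx}_{u}\,\partial_{u_{xx}},
\end{equation*}
where the extended coefficients are produced by the standard prolongation formula
\begin{equation*}
\mathscr{F}^{J}_{\alpha}=D_{J}\!\bigl(\mathscr{F}_{\alpha}-\mathscr{F}_{x}\,\alpha_{x}-\mathscr{F}_{t}\,\alpha_{t}\bigr)+\mathscr{F}_{x}\,\alpha_{J,x}+\mathscr{F}_{t}\,\alpha_{J,t},
\end{equation*}
with $D_{J}$ total derivatives and $\alpha\in\{\rho,u\}$. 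Next I would apply $\operatorname{Pr}^{(2)}\mathbb{P}$ to $\mathbb{R}_1$ and $\mathbb{R}_2$ term by term, obtaining expressions linear in the prolonged coefficients and in $\mathscr{F}_\rho,\mathscr{F}_u$ themselves (the contributions from the coefficient $A/\rho$ and $D/\rho$ enter via $\mathbb{P}(1/\rho)=-\mathscr{F}_\rho/\rho^2$).

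The core of the proof is to observe that if $\mathbb{P}$ generates a Lie point symmetry, then by definition the transformed functions $\rho^\star,u^\star$ must satisfy the same system \eqref{TM} up to order $\varepsilon$. Expanding $\mathbb{R}_i(x^\star,t^\star,\rho^\star,u^\star,\ldots)=0$ to first order in $\varepsilon$ and restricting to the solution manifold $\mathbb{R}_1=\mathbb{R}_2=0$ yields precisely $\operatorname{Pr}^{(2)}\mathbb{P}(\mathbb{R}_i)|_{\mathbb{R}_i=0}=0$; conversely, this infinitesimal identity integrates along the flow of $\mathbb{P}$ to give full group-invariance. This equivalence, which is Olver's Theorem~2.31, is the argument I would cite.

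The main obstacle, if one wants to go beyond merely stating the criterion and actually derive the determining equations, is the book-keeping in step (ii): on the manifold $\mathbb{R}_i=0$ one should substitute $\rho_t=-\rho u_x-\rho_x u$ (from $\mathbb{R}_1$) and $u_t=-uu_x-A\rho_x/\rho+Du_{xx}/\rho$ (from $\mathbb{R}_2$) into the expanded expressions, then equate coefficients of the remaining independent jet variables $\rho_x,u_x,u_{xx}$ and their products to zero. This produces a large overdetermined linear PDE system for $\mathscr{F}_x,\mathscr{F}_t,\mathscr{F}_\rho,\mathscr{F}_u$ whose analysis (standard but tedious) would be carried out in the subsequent subsection. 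The theorem itself, however, only asserts the invariance condition \eqref{PR}, which follows immediately from the prolongation formalism above.
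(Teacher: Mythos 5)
Your proposal is correct and follows essentially the same route as the paper: both justify the criterion \eqref{PR} by noting that $\mathbb{R}_2$ contains $u_{xx}$, so the second prolongation is the relevant extension, and then invoke the standard infinitesimal invariance criterion (you supply the first-order-in-$\varepsilon$ expansion argument and the citation to Olver explicitly, which the paper leaves implicit). The only difference in emphasis is that the paper's proof goes on to expand \eqref{PR}, extract the determining equations, and solve them for $\mathscr{F}_x, \mathscr{F}_t, \mathscr{F}_\rho, \mathscr{F}_u$, whereas you defer that bookkeeping; since the theorem as stated only asserts the invariance condition, your scoping is defensible.
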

\begin{proof}
We set $\mathcal{X} = (x, t)$, $\mathcal{U} = (\rho, u)$, so that $\mathbb{R}_{1}=\rho u_{x}+\rho_{x} u+ \rho_{t}=0$, and 
$\mathbb{R}_{2}=u_{t}+u u_{x}+\frac{A \rho_{x}}{\rho}-\frac{D u_{xx}}{\rho}$
is defined in $\mathcal{X} \times \mathcal{U}^{(2)}$. Because the highest derivative of system \ref{TM} is second-order, we need the second prolongation of $\mathbb{P}$. Meanwhile, $\mathbb{R}_i$ is defined in $\mathcal{X} \times \mathcal{U}^{(2)}$, so we have $\mathbb{P}$ satisfy the condition $\operatorname{Pr}^{(2)}\mathbb{P}(\mathbb{R}_i)\big|_{\mathbb{R}_i=0} = 0$ for $i=1,2$.

Expanding Eq. \ref{PR} and equating the coefficients of all the partial derivatives of $\rho$ and $u$, we can obtain the following.
\begin{equation}\label{DE}
(\mathscr{F}_t)_{tt} = (\mathscr{F}_x)_{tt} = (\mathscr{F}_t)_\rho = (\mathscr{F}_t)_u = (\mathscr{F}_t)_x = (\mathscr{F}_x)_\rho = (\mathscr{F}_x)_u = 0,\hspace{0.2cm} (\mathscr{F}_x)_{x} = (\mathscr{F}_t)_{t}, \hspace{0.2cm}
(\mathscr{F}_\rho) = -\rho (\mathscr{F}_t)_t,\hspace{0.2cm}
(\mathscr{F}_u)= (\mathscr{F}_x)_t.
\end{equation}
By solving Eqs. \ref{DE}, we derive the expressions of $\mathscr{F}_x, \mathscr{F}_t, \mathscr{F}_\rho, \mathscr{F}_u $ as
\begin{equation}
\mathscr{F}_{x}=e_1 x +e_3 t+ e_4, \quad \mathscr{F}_{t}=e_1 t +e_2, \quad \mathscr{F}_{\rho}= -e_1 \rho, \quad \mathscr{F}_{u}=e_3
\end{equation}
where $e_1, e_2, e_3, e_4$ are the real constants.
\end{proof}
Using the application of Lie group analysis\cite{bluman2010applications,olver1993applications}, we determine the Lie point symmetry generators of system \ref{TM} are presented in Table \ref{S1}.
\renewcommand{\arraystretch}{2.0}
\begin{table}[H]
    \centering
    \begin{tabular}{cccc}
    \hline \hline
      \textbf{Conditions} & \textbf{Symmetry Vectors} & \textbf{Symmetries} \\
        \hline \hline
   $e_1 \neq 0$   & $\mathscr{S}_{1}= x \frac{\partial}{\partial x}+t \frac{\partial}{\partial t}-\rho \frac{\partial}{\partial \rho}$ & \text{Dilation in $x$, $t$, $\rho$}\\
  $e_2 \neq 0$ & $\mathscr{S}_{2}= \frac{\partial}{\partial t}$& \text{Translation in $t$}\\
        $e_3 \neq 0$ & $\mathscr{S}_{3}= t \frac{\partial}{\partial x}+\frac{\partial}{\partial u}$ & \text{Galilean boost}\\
        $e_4 \neq 0$ & $\mathscr{S}_{4}= \frac{\partial}{\partial x}$ & \text{Translation in $x$}\\
   \hline \hline
    \end{tabular}
     \caption{Infinitesimal Generators and Symmetries.}
    \label{S1}
\end{table}
The Lie bracket is defined as $[\mathscr{S}_i,\mathscr{S}_j]=\mathscr{S}_{i}\mathscr{S}_{j}-\mathscr{S}_{j}\mathscr{S}_{i}$ and the set $\{\mathscr{S}_1, \mathscr{S}_2,\mathscr{S}_3,\mathscr{S}_4\}$ is closed under this Lie bracket.
In Table \ref{Table1} we show the commutation relations between the Lie algebra $\{\mathscr{S}_1, \mathscr{S}_2,\mathscr{S}_3,\mathscr{S}_4\}$. Also, from Table \ref{Table1}, we conclude that the above algebra is solvable.
\renewcommand{\arraystretch}{2.0}
\begin{table}[H]
    \centering
    \begin{minipage}{0.48\textwidth}
        \centering
        \begin{tabular}{c|cccc}
        \hline \hline
            $\star$ & $\mathscr{S}_1$ & $\mathscr{S}_2$ & $\mathscr{S}_3$ & $\mathscr{S}_4$ \\ \hline \hline
            $\mathscr{S}_1$ & 0 & $-\mathscr{S}_2$ & 0 & $-\mathscr{S}_4$  \\ 
            $\mathscr{S}_2$ & $\mathscr{S}_2$ & 0 & $\mathscr{S}_4$ & 0\\ 
            $\mathscr{S}_3$ & 0 & $-\mathscr{S}_4$ & 0 & 0\\ 
            $\mathscr{S}_4$ & $\mathscr{S}_4$ & 0 & 0& 0\\ \hline \hline
        \end{tabular}
        \caption{Commutation Table.}
        \label{Table1}
    \end{minipage}
\hspace{-1cm} 
    \begin{minipage}{0.48\textwidth}
        \centering
        \begin{tabular}{c|cccc}
        \hline \hline
            $\operatorname{Ad}(\exp{\varepsilon(\star)}\star)$ & $\mathscr{S}_1$ & $\mathscr{S}_2$ & $\mathscr{S}_3$ & $\mathscr{S}_4$\\ \hline \hline
            $\mathscr{S}_1$ & $\mathscr{S}_1$ & $e^{\varepsilon}\mathscr{S}_2$ & $\mathscr{S}_3$ & $e^{\varepsilon} \mathscr{S}_4$ \\ 
            $\mathscr{S}_2$ & $\mathscr{S}_1-\varepsilon \mathscr{S}_2$ & $\mathscr{S}_2$ & $\mathscr{S}_3-\varepsilon \mathscr{S}_{4}$ & $\mathscr{S}_4$ \\ 
            $\mathscr{S}_3$ & $\mathscr{S}_1$ & $\mathscr{S}_2+\varepsilon \mathscr{S}_{4}$ & $\mathscr{S}_3$ & $\mathscr{S}_4$\\ 
            $\mathscr{S}_4$ & $\mathscr{S}_1-\varepsilon \mathscr{S}_4$ & $\mathscr{S}_2$ & $\mathscr{S}_3$ & $\mathscr{S}_4$  \\ \hline \hline
        \end{tabular}
        \caption{Adjoint Representation Table.}
        \label{AD}
    \end{minipage}
\end{table}
\subsection{Initial Condition Left-Invariant by Classical Symmetries}
\qquad Using the method of classical symmetric analysis with initial condition left-invariant \cite{goard2014symmetries,al2012nonclassical,zhang2010classical}, in the traffic flow model (\ref{TM}) is equivalent to 
\begin{equation}\label{IC2}
 \begin{split}
& \rho_{t}=-\rho u_{x}-\rho_{x} u\text{,}\\
& u_{t}=-u u_{x}-\frac{A \rho_{x}}{\rho}+\frac{D u_{xx}}{\rho}\text{,}
\end{split}
\end{equation}
and satisfies the initial conditions
\begin{equation}
 \begin{split}
& \rho(t,x)|_{t=0}=\Theta_{1}(x)\text{,}\\
& u(t,x)|_{t=0}=\Theta_{2}(x)\text{.}
\end{split}
\end{equation}
The operator under classical Lie symmetry can be written as 
\begin{equation}
    \mathbb{P}= (e_1 x +e_3 t+ e_4) \frac{\partial}{\partial_{x}}+ (e_1 t +e_2)\frac{\partial}{\partial_{t}}-(e_1 \rho) \frac{\partial}{\partial_{\rho}}+e_3 \frac{\partial}{\partial_{u}}\text{.}    \end{equation}
    The operator $\mathbb{P}$ is invariant when it satisfies the initial condition left invariant in \cite{goard2014symmetries}. We have 
    \begin{equation}
        \mathbb{P}(t-0)|_{t=0}=0 \Rightarrow e_2=0,
    \end{equation}
    and 
    \begin{equation}\label{IC1}
        \begin{split}
            & \mathbb{P}(\rho-\Theta_{1}(x))|_{\rho=\Theta_{1}(x), t=0}=0 \text{,}\\
             & \mathbb{P}(u-\Theta_{2}(x))|_{u=\Theta_{2}(x), t=0}=0 \text{,}
        \end{split}
    \end{equation}
    solving the Eq. (\ref{IC1}), we find that the initial conditions $\Theta_{i}(i=1,2)$ satisfy 
    \begin{equation}\label{ICLI}
     \Theta_{i}(x)= \Theta(x)=
     \begin{cases}
           \frac{\delta}{e_1 x+e_4},\hspace{1.3cm} e_2=0 \text{,}\\
           \delta (e_1 x+e_4)^{\frac{e_{3}}{e_{1}}}, \hspace{0.2cm} e_1 \neq 0 , e_2=0 \text{,}
    \end{cases} 
    \end{equation}
    where $\delta$ is a constant. 
Now, we can conclude from Eq. (\ref{ICLI}) that
\begin{enumerate}
    \item For $e_2=0$
\begin{equation}
   \mathbb{P}^1= (e_1 x +e_3 t+ e_4) \frac{\partial}{\partial_{x}}+ (e_1 t)\frac{\partial}{\partial_{t}}-(e_1 \rho) \frac{\partial}{\partial_{\rho}}+e_3 \frac{\partial}{\partial_{u}}\text{,}  
\end{equation}
the Eqs. (\ref{IC2}) satisfy the initial conditions
\begin{equation}
    \rho(x,0)=u(x,0)= \frac{\delta}{e_1 x+e_4}.
\end{equation}
\item For $e_1 \neq 0, e_2=0$
\begin{equation}
   \mathbb{P}^1= (e_1 x +e_3 t+ e_4) \frac{\partial}{\partial_{x}}+ (e_1 t)\frac{\partial}{\partial_{t}}-(e_1 \rho) \frac{\partial}{\partial_{\rho}}+e_3 \frac{\partial}{\partial_{u}}\text{,}  
\end{equation}
the Eqs. (\ref{IC2}) satisfy the initial conditions
\begin{equation}
    \rho(x,0)=u(x,0)=\delta (e_1 x+e_4)^{\frac{e_{3}}{e_{1}}}.
\end{equation}
\end{enumerate}
\subsection{Symmetry Group of the System}
\qquad We obtain the corresponding four one-parameter Lie group of transformations:
\begin{equation}
    \begin{aligned}
&\mathcal{G}_{1}:(x^{\star}, t^{\star}, \rho^{\star}, u^{\star})= (x e^\varepsilon, t e^\varepsilon, \rho e^{-\varepsilon}, u)\text{,}\\
&\mathcal{G}_{2}:(x^{\star}, t^{\star}, \rho^{\star}, u^{\star})= (x, t+\varepsilon, \rho, u)\text{,}\\
&\mathcal{G}_{3}:(x^{\star}, t^{\star}, \rho^{\star}, u^{\star})= (x+\varepsilon t, t, \rho, u+\varepsilon)\text{,}\\
&\mathcal{G}_{4}:(x^{\star}, t^{\star}, \rho^{\star}, u^{\star})= (x+\varepsilon, t, \rho, u)\text{.}
    \end{aligned}
\end{equation}

In view of the above one-parameter Lie group of transformations, we state the following:
\begin{thm}
Suppose that $\rho=m(x,t)$, $u=n(x,t)$ be a known solution of the given system \eqref{TM}. Then the group action of $\mathcal{G}_{1}, \mathcal{G}_{2}, \mathcal{G}_{3}, \mathcal{G}_{4}$ provides respectively the one-parameter family of solutions as
\begin{equation}
    \begin{aligned}
&\mathcal{G}_{1}:\hspace{0.3cm} \rho= e^{- \varepsilon}m(x e^{-\varepsilon},t e^{-\varepsilon}),\hspace{0.1cm} u=n(x e^{-\varepsilon},t e^{-\varepsilon}),\\
&\mathcal{G}_{2}:\hspace{0.3cm}\rho=m(x,t-\varepsilon),\hspace{0.1cm} u=n(x,t-\varepsilon),\\
&\mathcal{G}_{3}:\hspace{0.3cm}\rho=m(x-\varepsilon t,t),\hspace{0.1cm} u=\varepsilon+n(x-\varepsilon t,t),\\
&\mathcal{G}_{4}:\hspace{0.3cm}\rho=m(x-\varepsilon,t),\hspace{0.1cm} u=n(x-\varepsilon,t).
    \end{aligned}
\end{equation}
\end{thm}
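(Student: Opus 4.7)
The plan is to invoke the fundamental principle of Lie group analysis, namely that a one-parameter group generated by a Lie point symmetry of a PDE system carries solutions to solutions, and then to implement this principle concretely for each of the four groups $\mathcal{G}_i$.

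First, I would recall from Theorem 2.1 that each generator $\mathscr{S}_i$ satisfies the infinitesimal invariance criterion $\operatorname{Pr}^{(2)}\mathscr{S}_i(\mathbb{R}_j)\big|_{\mathbb{R}_j=0}=0$. Integrating the characteristic system $dx^\star/d\varepsilon=\mathscr{F}_x$, $dt^\star/d\varepsilon=\mathscr{F}_t$, $d\rho^\star/d\varepsilon=\mathscr{F}_\rho$, $du^\star/d\varepsilon=\mathscr{F}_u$ with initial data $(x,t,\rho,u)$ at $\varepsilon=0$ for each $\mathscr{S}_i$ in turn reproduces the four flows $\mathcal{G}_1,\ldots,\mathcal{G}_4$ displayed above, so the infinitesimal invariance lifts to finite invariance of the solution manifold.

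The main step is then the standard ``push the graph'' construction. Given a known solution $\rho=m(x,t)$, $u=n(x,t)$, I would view its graph $\Sigma=\{(x,t,m(x,t),n(x,t))\}$ as a submanifold of the jet space, transport it pointwise by $\mathcal{G}_i$ to obtain a new surface $\Sigma^\star$, and use the symmetry property to conclude that $\Sigma^\star$ is again the graph of a solution written in the starred coordinates. To display this new solution as a function, I would invert the action on the independent variables to express $(x,t)$ in terms of $(x^\star,t^\star)$, substitute $\rho=m(x,t)$ and $u=n(x,t)$ into the dependent-variable rules $\rho^\star=\rho^\star(\rho,u)$ and $u^\star=u^\star(\rho,u)$, and finally relabel the starred coordinates back to $(x,t)$.

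Carrying this recipe out case by case is essentially routine substitution. For $\mathcal{G}_1$ the inversion $x=x^\star e^{-\varepsilon}$, $t=t^\star e^{-\varepsilon}$, combined with $\rho^\star=e^{-\varepsilon}\rho$, yields the dilated solution; for $\mathcal{G}_2$ and $\mathcal{G}_4$ the inversions are simple shifts in $t$ and $x$, respectively; and for the Galilean boost $\mathcal{G}_3$ the shift $x=x^\star-\varepsilon t^\star$ together with the rule $u^\star=u+\varepsilon$ produces the extra additive constant in the transformed velocity. The main obstacle is really only notational rather than mathematical: one must carefully track which arguments are starred and which are not, particularly in the boost case where the independent variable $x$ and the dependent variable $u$ shift simultaneously. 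Once this bookkeeping is handled uniformly across the four cases, the theorem follows directly.
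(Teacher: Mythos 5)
Your proposal is correct: exponentiating the generators from Table \ref{S1} to obtain the flows $\mathcal{G}_1,\ldots,\mathcal{G}_4$, invoking the solution-to-solution property guaranteed by the prolongation criterion, and then inverting the action on the independent variables before relabelling is exactly the standard argument, and your case-by-case substitutions (including the sign and the extra $+\varepsilon$ in the boost case) all check out. The paper itself states this theorem without any written proof, so your argument supplies precisely the routine verification the authors left implicit; there is nothing to contrast.
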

\subsection{Calculation of Invariants}
\begin{lem}
The 2 arbitrary elements $\mathbb{A}=\sum_{i=1}^{4} \omega_{i} \mathscr{S}_{i}$ and $\mathbb{B}=\sum_{i=1}^{4} \gamma_{i} \mathscr{S}_{i}$ of $\mathcal{L}^4$   admit a general invariant function $I_{nv}(\omega_{1}, \omega_{2}, \omega_{3}, \omega_{4})=\mathbb{W}(\omega_{1}, \omega_{3})$, for $\omega, \gamma \in R$.
\end{lem}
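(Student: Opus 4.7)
The plan is to follow the standard Olver-type construction for computing the common invariant of the adjoint representation. An element $\mathbb{A}=\sum_{i=1}^{4}\omega_i\mathscr{S}_i$ is mapped by $\operatorname{Ad}\bigl(\exp(\varepsilon\mathbb{B})\bigr)$ to a new element whose coefficients are smooth functions of $\varepsilon$, so a function $I_{nv}(\omega_1,\omega_2,\omega_3,\omega_4)$ is invariant iff it is annihilated by the vector fields obtained by differentiating the adjoint action in $\varepsilon$ at $\varepsilon=0$. Since the adjoint group is generated by the one--parameter subgroups $\exp(\varepsilon\mathscr{S}_j)$, it suffices to take $\mathbb{B}=\mathscr{S}_j$, $j=1,2,3,4$, and verify invariance under each.

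First I would read off the transformed coefficient vectors directly from Table \ref{AD}. Applying the action to $\mathbb{A}$ column by column one gets, for $\mathscr{S}_1$, $(\omega_1,e^{\varepsilon}\omega_2,\omega_3,e^{\varepsilon}\omega_4)$; for $\mathscr{S}_2$, $(\omega_1,\omega_2-\varepsilon\omega_1,\omega_3,\omega_4-\varepsilon\omega_3)$; for $\mathscr{S}_3$, $(\omega_1,\omega_2,\omega_3,\omega_4+\varepsilon\omega_2)$; and for $\mathscr{S}_4$, $(\omega_1,\omega_2,\omega_3,\omega_4-\varepsilon\omega_1)$. Differentiating in $\varepsilon$ at $\varepsilon=0$ produces the four characteristic vector fields
\begin{equation*}
V_1=\omega_2\partial_{\omega_2}+\omega_4\partial_{\omega_4},\qquad
V_2=-\omega_1\partial_{\omega_2}-\omega_3\partial_{\omega_4},\qquad
V_3=\omega_2\partial_{\omega_4},\qquad
V_4=-\omega_1\partial_{\omega_4},
\end{equation*}
each of which must annihilate $I_{nv}$.

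The next step is to solve the resulting overdetermined linear system $V_jI_{nv}=0$. From $V_3$ and $V_4$, at any point where $(\omega_1,\omega_2)\neq(0,0)$ we immediately force $\partial I_{nv}/\partial\omega_4=0$. Substituting this back, the equation $V_1I_{nv}=0$ collapses to $\omega_2\,\partial I_{nv}/\partial\omega_2=0$, giving $\partial I_{nv}/\partial\omega_2=0$ on the generic stratum, while $V_2I_{nv}=0$ then becomes a tautology. Therefore $I_{nv}$ depends only on $\omega_1$ and $\omega_3$, i.e.\ $I_{nv}=\mathbb{W}(\omega_1,\omega_3)$, and by continuity this extends to all of $\mathcal{L}^4$.

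I expect no serious obstacle: the computation is entirely linear algebra once the adjoint table is in hand. The only care needed is in the bookkeeping of signs when differentiating in $\varepsilon$ and in recognizing that $V_3,V_4$ give two independent conditions that, together, kill $\partial_{\omega_4}$ everywhere except on a measure--zero set, after which the remaining two conditions collapse. I would also briefly remark that, because the algebra is solvable (as already noted from Table \ref{Table1}), one does not pick up any additional semisimple invariants and the result $\mathbb{W}(\omega_1,\omega_3)$ is the full ring of generic adjoint invariants.
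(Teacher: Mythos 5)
Your proposal is correct and takes essentially the same route as the paper: the paper writes down exactly the four linear first-order PDEs you obtain (labelled by the coefficients $\gamma_1,\dots,\gamma_4$ and identical, up to overall sign, to your $V_1,\dots,V_4$) and solves them to conclude $I_{nv}=\mathbb{W}(\omega_1,\omega_3)$. Your derivation of the generators by differentiating the adjoint table at $\varepsilon=0$ is just a more explicit presentation of the same computation via structure constants, and your explicit solving of the system is more detailed than the paper's "which, on solving, produces $I_{nv}$."
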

\begin{proof}
The following system of PDEs can be derived by straightforward analysis from \cite{sharma2025analysis, bluman2010applications}:
\begin{equation}
\gamma_{1}: \omega_{2}\frac{\partial \pi}{\partial \omega_{2}}+\omega_{4} \frac{\partial \pi}{\partial \omega_{4}}=0\text{,} \quad \gamma_{2} : \omega_{1} \frac{\partial \pi}{\partial \omega_{2}}+\omega_{3} \frac{\partial \pi}{\partial \omega_{4}}=0\text{,}\quad \gamma_{3}:\omega_{2}\frac{\partial \pi}{\partial \omega_{4}}=0 \text{,} \quad \gamma_{4} : \omega_{1} \frac{\partial \pi}{\partial \omega_{4}}=0\text{.}
\end{equation}
which, on solving, produces $I_{nv}$.
\end{proof}
\subsection{Killing Form}
\begin{lem}
$K(\mathscr{S}, \mathscr{S})= 2 \omega_1^2$ is the Killing form for $L^{4}$.
\end{lem}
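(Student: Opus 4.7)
The plan is to compute the Killing form directly from its definition $K(X,Y) = \operatorname{tr}(\operatorname{ad}(X)\operatorname{ad}(Y))$ applied to the generic element $\mathscr{S} = \sum_{i=1}^{4} \omega_i \mathscr{S}_i$. By bilinearity it suffices to assemble the symmetric $4 \times 4$ matrix with entries $K_{ij} = \operatorname{tr}(\operatorname{ad}(\mathscr{S}_i)\operatorname{ad}(\mathscr{S}_j))$ and then expand $K(\mathscr{S},\mathscr{S}) = \sum_{i,j} \omega_i \omega_j K_{ij}$.

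First I would read off the matrices $\operatorname{ad}(\mathscr{S}_i)$ column by column from the commutation table (Table~\ref{Table1}), using the basis ordering $\{\mathscr{S}_1,\mathscr{S}_2,\mathscr{S}_3,\mathscr{S}_4\}$. Because $[\mathscr{S}_1,\mathscr{S}_2] = -\mathscr{S}_2$ and $[\mathscr{S}_1,\mathscr{S}_4] = -\mathscr{S}_4$ while $\mathscr{S}_1$ commutes with $\mathscr{S}_1$ and $\mathscr{S}_3$, the operator $\operatorname{ad}(\mathscr{S}_1)$ is the diagonal matrix $\operatorname{diag}(0,-1,0,-1)$. By contrast, the three remaining adjoint matrices $\operatorname{ad}(\mathscr{S}_2), \operatorname{ad}(\mathscr{S}_3), \operatorname{ad}(\mathscr{S}_4)$ each carry only one or two nonzero entries, all of which land in positions strictly below the diagonal (corresponding to brackets whose outputs are $\mathscr{S}_2$ or $\mathscr{S}_4$). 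In particular, these three matrices are nilpotent and strictly lower triangular.

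Next I would compute the traces. The only substantive calculation is $K_{11} = \operatorname{tr}(\operatorname{ad}(\mathscr{S}_1)^2) = \operatorname{tr}\,\operatorname{diag}(0,1,0,1) = 2$. For every other pair, at least one factor is strictly lower triangular; multiplying a strictly lower triangular matrix either by the diagonal $\operatorname{ad}(\mathscr{S}_1)$ or by another strictly lower triangular matrix again yields a strictly lower triangular matrix, whose trace vanishes. Thus the Killing matrix collapses to $\operatorname{diag}(2,0,0,0)$, and $K(\mathscr{S},\mathscr{S}) = 2\omega_1^2$ follows at once.

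The argument is essentially mechanical, so there is no deep obstruction. The only place that requires mild care is the bookkeeping: one must respect the sign convention of Table~\ref{Table1} (the entry at row $i$, column $j$ is $[\mathscr{S}_i,\mathscr{S}_j]$, which is antisymmetric) when writing down the columns of each adjoint matrix, and one should verify explicitly that in the fixed basis ordering the three nilpotent generators really do have all their nonzero entries below the diagonal, so that the vanishing of all off-diagonal Killing entries can be read off without a brute-force matrix multiplication.
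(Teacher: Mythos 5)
Your proof is correct and follows essentially the same route as the paper: both arguments read the adjoint matrices off the commutation table, observe that in the basis ordering $\{\mathscr{S}_1,\mathscr{S}_2,\mathscr{S}_3,\mathscr{S}_4\}$ everything is lower triangular with diagonal contributions coming only from $\mathscr{S}_1$, and conclude $K(\mathscr{S},\mathscr{S})=2\omega_1^2$ by tracing the square. The only difference is organizational --- the paper assembles the single matrix $\operatorname{ad}(\mathscr{S})$ for the general element and traces its square directly, whereas you compute the Gram matrix $K_{ij}=\operatorname{tr}\bigl(\operatorname{ad}(\mathscr{S}_i)\operatorname{ad}(\mathscr{S}_j)\bigr)$ and invoke bilinearity --- and the two computations are trivially equivalent.
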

\begin{proof}
It is evident that
$
K(\mathscr{S},\mathscr{S})=Trace(\operatorname{ad(\mathscr{S})} \circ \operatorname{ad(\mathscr{S})})\text{,} 
$
where
$$
\begin{aligned}
& \operatorname{ad}(\mathscr{S})=\left[\begin{array}{cccc}
0 & 0 & 0 & 0  \\
-\omega_{2} & \omega_{1} & 0 & 0 \\
0 & 0 & 0 & 0 \\
-\omega_{4} & \omega_{3} & -\omega_{2} & \omega_{1}
\end{array}\right]\text{,}
\end{aligned} 
$$
which on simplification, generates $K(\mathscr{S}, \mathscr{S})=2 \omega_1^2$\text{.}
\end{proof}
\subsection{Construction of Adjoint Transformation Matrix}
\qquad The adjoint representation corresponding to $L^{4}$ is illustrated in Table \ref{AD} using the relation
\begin{equation}
\operatorname{Ad}(\exp (\varepsilon \mathscr{S}_{i}) \mathscr{S}_{j})  =\mathscr{S}_{i}-\varepsilon[\mathscr{S}_{i}, \mathscr{S}_{j}]+\frac{1}{2 !} \varepsilon^{2}[\mathscr{S}_{i},[\mathscr{S}_{i}, \mathscr{S}_{j}]]-\ldots\text{,}
\end{equation}
and the adjoint actions of the symmetries $\mathscr{S}_{i}$, where $i=1, \dots ,4$, operating on $\mathscr{S}$ provide the subsequent adjoint transformation matrices:
\begin{alignat*}{4}
K_{1}= &
\begin{pmatrix}
1 & 0 & 0 & 0\\
0 & e^{\varepsilon_{1}} & 0 & 0\\
0 & 0 & 1 & 0\\
0 & 0 & 0 & e^{\varepsilon_{1}}
\end{pmatrix}\text{,}  \quad&
K_{2}= & 
\begin{pmatrix}
1 & -\varepsilon_{2} & 0 & 0\\
0 & 1 & 0 & 0\\
0 & 0 & 1 & -\varepsilon_{2}\\
0 & 0 & 0 & 1
\end{pmatrix}\text{,} \quad&
K_{3}= &
\begin{pmatrix}
1 & 0 & 0 & 0 \\
0 & 1 & 0 & \varepsilon_{3}\\
0 & 0 & 1 & 0\\
0 & 0 & 0 & 1
\end{pmatrix}\text{,}  \quad&
     K_{4}= &
     \begin{pmatrix}
1 & 0 & 0 & -\varepsilon_{4} \\
0 & 1 & 0 & 0\\
0 & 0 & 1 & 0 \\
0 & 0 & 0 & 1
 \end{pmatrix}\text{.} 
\end{alignat*}
The adjoint action of $\mathscr{S}_{j}$ on $\mathscr{S}_{i}$ can be derived from the adjoint representation Table \ref{AD}. 

The algebraic equations are resolved to provide an optimal Lie algebra system.  The corresponding Lie subalgebras can be identified by employing the adjoint action on the set of these Lie subalgebras. Let
$$
\mathcal{T}=\omega_{1} \mathscr{S}_{1}+\omega_{2} \mathscr{S}_{2}+\omega_{3} \mathscr{S}_{3}+\omega_{4} \mathscr{S}_{4}\text {,}
$$
where $\omega_{1}, \omega_{2}, \omega_{3}$, and $\omega_{4}$ are real constants. In this context, $\mathcal{T}$ can be expressed as a column vector with elements $\omega_{1}, \omega_{2}, \omega_{3}, \omega_{4}$.  
\begin{equation}
\mathcal{K}\left(\varepsilon_{1}, \varepsilon_{2}, \varepsilon_{3}, \varepsilon_{4}\right) = K_{4} K_{3} K_{2} K_{1} \text{.}
\end{equation}
The above equation gives
$$
\mathcal{K}=\left[\begin{array}{cccc}
1 & -\varepsilon_{2} e^{\varepsilon_{1}} & 0 & -\varepsilon_{4} e^{\varepsilon_{1}}\\
0 & e^{\varepsilon_{1}} & 0 & \varepsilon_{3} e^{\varepsilon_{1}}\\
0 & 0 & 1 & -\varepsilon_{2} e^{\varepsilon_{1}} \\
0 & 0 & 0 & e^{\varepsilon_{1}}  
\end{array}\right]\text{.} 
$$
\renewcommand{\arraystretch}{2.0}
\begin{table}[H]
    \centering
    \begin{tabular}{c|cccc}\hline 
    \multicolumn{5}{c}{Cofficients of $\mathscr{S}_{i}$} \\\hline \hline
        $\operatorname{Ad}_{\exp{(\varepsilon \mathscr{S}_i)}}\mathcal{T}$ & $\mathscr{S}_1$ &$ \mathscr{S}_2$ & $\mathscr{S}_3$ & $\mathscr{S}_4$\\ \hline \hline
        $\operatorname{Ad}_{\exp{(\varepsilon \mathscr{S}_1)}}\mathcal{T})$ & $\omega_1$ & $e^{\varepsilon}\omega_2$ & $\omega_3$ & $e^{\varepsilon} \omega_4$ \\ 
        $\operatorname{Ad}_{\exp{(\varepsilon \mathscr{S}_2)}}\mathcal{T})$ & $\omega_1$ & $\omega_2-\varepsilon \omega_1$ & $\omega_3$ & $\omega_4-\varepsilon \omega_{3}$ \\ 
        $\operatorname{Ad}_{\exp{(\varepsilon \mathscr{S}_3)}}\mathcal{T})$ & $\omega_1$ & $\omega_2$ & $\omega_3$ & $\omega_4 +\varepsilon \omega_2$ \\ 
        $\operatorname{Ad}_{\exp{(\varepsilon \mathscr{S}_4)}}\mathcal{T})$ & $\omega_1$ & $\omega_2$ & $\omega_3$ & $\omega_4-\varepsilon \omega_1$ \\ \hline \hline
    \end{tabular}
    \caption{Table for Invariant Function.}
    \label{t2}
\end{table}
\begin{lem}
   The subalgebra of $\mathcal{L}^4$ generated by $\mathscr{S}_1$ and $\mathscr{S}_3$, denoted as $\mathcal{M}=\omega_1$ and $\mathcal{N}=\omega_3$, respectively, is $\operatorname{Ad}$-invariant.
\end{lem}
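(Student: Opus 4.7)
The plan is to show that the coordinate functions $\mathcal{M}=\omega_1$ and $\mathcal{N}=\omega_3$ on $\mathcal{L}^4$ are pointwise preserved by the adjoint action of every one-parameter subgroup $\exp(\varepsilon \mathscr{S}_i)$ for $i=1,\dots,4$. Since the connected component of the adjoint group is generated by these four subgroups, invariance under each generator will upgrade automatically to invariance under every $\operatorname{Ad}(g)$, which is the content of the lemma.

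Concretely, I would take a generic element $\mathcal{T}=\omega_1\mathscr{S}_1+\omega_2\mathscr{S}_2+\omega_3\mathscr{S}_3+\omega_4\mathscr{S}_4$ and apply $\operatorname{Ad}_{\exp(\varepsilon \mathscr{S}_i)}$ to it using the expansion
\begin{equation*}
\operatorname{Ad}_{\exp(\varepsilon \mathscr{S}_i)}\mathscr{S}_j = \mathscr{S}_j - \varepsilon[\mathscr{S}_i,\mathscr{S}_j] + \tfrac{\varepsilon^2}{2!}[\mathscr{S}_i,[\mathscr{S}_i,\mathscr{S}_j]] - \cdots
\end{equation*}
combined with the brackets collected in Table \ref{Table1}. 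The resulting four transformation rules are precisely the rows of Table \ref{t2}, and the essential observation is that in every row the coefficients of $\mathscr{S}_1$ and $\mathscr{S}_3$ appear unchanged as $\omega_1$ and $\omega_3$, while only the coefficients of $\mathscr{S}_2$ and $\mathscr{S}_4$ pick up $\varepsilon$-dependent corrections. This is the core calculation.

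An equivalent one-shot version of the same argument invokes the composed matrix $\mathcal{K}(\varepsilon_1,\varepsilon_2,\varepsilon_3,\varepsilon_4)=K_4K_3K_2K_1$ already displayed above. With the convention that $\mathcal{K}$ acts on the coefficient row vector $(\omega_1,\omega_2,\omega_3,\omega_4)$ from the right, the first and third columns of $\mathcal{K}$ are the standard basis vectors $(1,0,0,0)^{\top}$ and $(0,0,1,0)^{\top}$, which forces $\omega_1'=\omega_1$ and $\omega_3'=\omega_3$ for every choice of parameters $\varepsilon_1,\dots,\varepsilon_4$. Because a generic connected adjoint transformation factorizes through such a $\mathcal{K}$, this establishes the $\operatorname{Ad}$-invariance of $\mathcal{M}$ and $\mathcal{N}$ and therefore of the two-dimensional distinguished subspace they single out.

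The argument is essentially a verification and presents no genuine obstacle. The single bookkeeping point worth care is aligning the row/column convention used for $\mathcal{K}$ with the entry-by-entry transformation rules of Table \ref{t2}; once that alignment is fixed, the invariance of $\omega_1$ and $\omega_3$ is read off directly from the special structure of the first and third columns of $\mathcal{K}$.
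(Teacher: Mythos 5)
Your argument is correct and follows essentially the same route as the paper, which simply cites the rows of Table \ref{t2} (equivalently, the first and third columns of $\mathcal{K}$) to observe that $\omega_1$ and $\omega_3$ are unchanged under every adjoint action. Your write-up merely makes explicit the bracket computations and the $\mathcal{K}$-matrix bookkeeping that the paper leaves implicit.
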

\begin{proof}
   The data in Table \ref{t2} demonstrates the evidence.
\end{proof}
\begin{lem}
Let $\mathcal{P}$ be the function with respect to the basis $\{\mathscr{S}_i \mid 1 \leq i \leq 4\}$, defined as $\mathcal{P}:\mathcal{L}^4 \rightarrow \mathbb{R}$ as follows:
\[
\mathcal{P} = 
\begin{cases}
1, & \text{if } \omega_1^2 + \omega_2^2 + \omega_3^2 \neq 0, \\
0, & \text{otherwise}.
\end{cases}
\]
Then under the action of the Lie algebra $\mathcal{L}^4$, $\mathcal{P}$ is invariant.
\end{lem}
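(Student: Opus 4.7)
The plan is to prove the claim by direct verification using the adjoint representation already tabulated in Table~\ref{t2}. Since $\mathcal{P}$ is a $\{0,1\}$-valued function that detects whether the projection $(\omega_1,\omega_2,\omega_3)$ of an element of $\mathcal{L}^4$ is the zero vector, it suffices to show that each of the four one-parameter adjoint actions preserves the set $\{(\omega_1,\omega_2,\omega_3,\omega_4)\mid \omega_1^2+\omega_2^2+\omega_3^2\neq 0\}$ and its complement.

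First I would read off from Table~\ref{t2} the transformed triple $(\omega_1',\omega_2',\omega_3')$ under each $\operatorname{Ad}_{\exp(\varepsilon\mathscr{S}_i)}$, namely $(\omega_1,e^{\varepsilon}\omega_2,\omega_3)$ for $i=1$, $(\omega_1,\omega_2-\varepsilon\omega_1,\omega_3)$ for $i=2$, and $(\omega_1,\omega_2,\omega_3)$ for $i=3,4$. The cases $i=3,4$ are immediate because they leave the first three coordinates fixed. For $i=1$, the factor $e^{\varepsilon}>0$ cannot send a nonzero $\omega_2$ to zero nor create a nonzero entry from nothing, so $\omega_1^2+e^{2\varepsilon}\omega_2^2+\omega_3^2$ vanishes exactly when $\omega_1^2+\omega_2^2+\omega_3^2$ does.

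The only case that demands a moment of thought is $i=2$, where $\omega_2'=\omega_2-\varepsilon\omega_1$ could conceivably vanish even if $\omega_2\neq 0$. I would handle it by a short case analysis on the support of $(\omega_1,\omega_2,\omega_3)$: if $\omega_1\neq 0$, then $\omega_1'=\omega_1\neq 0$; if $\omega_1=0$ but $\omega_2\neq 0$, then $\omega_2'=\omega_2\neq 0$; if $\omega_1=\omega_2=0$ but $\omega_3\neq 0$, then $\omega_3'=\omega_3\neq 0$. Conversely, if the original triple is zero, then $\omega_2'=0-\varepsilon\cdot 0 = 0$ and the new triple is also zero. Thus the value of $\mathcal{P}$ is preserved in every subcase.

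Since the one-parameter subgroups $\operatorname{Ad}_{\exp(\varepsilon\mathscr{S}_i)}$ generate the full adjoint group acting on $\mathcal{L}^4$, invariance under each generator implies invariance of $\mathcal{P}$ under the whole action, completing the proof. I do not anticipate a genuine obstacle here; the verification is routine and its only subtlety is the $\mathscr{S}_2$-case, where the shift $-\varepsilon\omega_1$ on the $\mathscr{S}_2$-component is exactly compensated by the preservation of $\omega_1$ itself in the $\mathscr{S}_1$-slot.
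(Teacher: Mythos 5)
Your proposal is correct and follows essentially the same route as the paper: both reduce to checking the four one-parameter adjoint actions from Table~\ref{t2}, dispose of $\mathscr{S}_3,\mathscr{S}_4$ immediately since they fix $(\omega_1,\omega_2,\omega_3)$, and then verify that the $\mathscr{S}_1$ and $\mathscr{S}_2$ actions preserve the vanishing (or non-vanishing) of $\omega_1^2+\omega_2^2+\omega_3^2$. Your explicit case split for the $\mathscr{S}_2$ action and the closing remark that the one-parameter subgroups generate the adjoint group are slightly more detailed than the paper's wording, but the argument is the same.
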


\begin{proof}
From Table \ref{t2}, it is evident that the coefficients associated with the symmetries $\mathscr{S}_1$, $\mathscr{S}_2$, and $\mathscr{S}_3$ remain unchanged under the adjoint transformations $\operatorname{Ad}_{\exp(\varepsilon \mathscr{S}_j)}$ for $j = 3, 4$. Therefore, it is sufficient to verify that the function $\mathcal{P}$ is invariant under adjoint actions $\operatorname{Ad}_{\exp(\varepsilon \mathscr{S}_1)}$ and $\operatorname{Ad}_{\exp(\varepsilon \mathscr{S}_2)}$.

Let $\omega_i^\star$, for $i = 1, 2, 3, 4$, represent the transformed coefficients following the execution of these adjoint operations.  The following relationships are established:
\[
\begin{aligned}
& (\omega_1^\star)^2 + (\omega_2^\star)^2 + (\omega_3^\star)^2 = \omega_1^2 + (e^{\varepsilon} \omega_2)^2 + \omega_3^2, \\
& (\omega_1^\star)^2 + (\omega_2^\star)^2 + (\omega_3^\star)^2 = \omega_1^2 + (\omega_2 - \varepsilon \omega_1)^2 + \omega_3^2.
\end{aligned}
\]

In both cases, it is clear that the condition $\omega_1^2 + \omega_2^2 + \omega_3^2 = 0$ holds if and only if $(\omega_1^\star)^2 + (\omega_2^\star)^2 + (\omega_3^\star)^2 = 0$. Thus, under the adjoint action of the Lie algebra $\mathcal{L}^4$, the function $\mathcal{P}$ remains invariant.
\end{proof}
\begin{lem}\label{LEM1}
Let the function $\mathcal{Q}: \mathcal{L}^4 \rightarrow \mathbb{R}$ be defined with respect to the basis $\{\mathscr{S}_i \mid 1 \leq i \leq 4\}$. Then $\mathcal{Q}$ is invariant under the adjoint action within the subalgebra generated by $\{\mathscr{S}_2, \mathscr{S}_3, \mathscr{S}_4\}$, i.e., $
\mathcal{Q}(\mathcal{T}) = \mathcal{Q}(\operatorname{Ad}_g(\mathcal{T})),
$
where
\[
\mathcal{Q} = 
\begin{cases}
\operatorname{sgn}(\omega_2), & \text{if } \omega_1 = 0, \\
0, & \text{otherwise}.
\end{cases}
\]
\end{lem}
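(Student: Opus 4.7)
The proof plan is to unwind the definition of $\mathcal{Q}$ into its two cases ($\omega_1 = 0$ versus $\omega_1 \neq 0$) and, for each case, verify invariance under the three generators $\operatorname{Ad}_{\exp(\varepsilon \mathscr{S}_j)}$ with $j \in \{2,3,4\}$ separately. Since the subgroup generated by these three adjoint actions is obtained by composition, checking the generators suffices: if each preserves $\mathcal{Q}$, then so does any finite product, and hence every $\operatorname{Ad}_g$ with $g$ in the subgroup.

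The key observation I would extract from Table \ref{t2} is that the coefficient $\omega_1$ is fixed under $\operatorname{Ad}_{\exp(\varepsilon \mathscr{S}_j)}$ for each $j = 2, 3, 4$; that is, $\omega_1^\star = \omega_1$ in all three rows. This immediately handles the branch $\omega_1 \neq 0$: since $\omega_1^\star = \omega_1 \neq 0$, both $\mathcal{Q}(\mathcal{T})$ and $\mathcal{Q}(\operatorname{Ad}_g(\mathcal{T}))$ equal $0$, so invariance holds trivially.

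For the branch $\omega_1 = 0$, I would substitute $\omega_1 = 0$ into the transformation rules read off from Table \ref{t2} and track $\omega_2^\star$. Under $\operatorname{Ad}_{\exp(\varepsilon \mathscr{S}_2)}$, we have $\omega_2^\star = \omega_2 - \varepsilon \omega_1 = \omega_2$; under $\operatorname{Ad}_{\exp(\varepsilon \mathscr{S}_3)}$ and $\operatorname{Ad}_{\exp(\varepsilon \mathscr{S}_4)}$, the coefficient $\omega_2$ is already unchanged. Consequently $\omega_2^\star = \omega_2$, so $\operatorname{sgn}(\omega_2^\star) = \operatorname{sgn}(\omega_2)$, and again $\mathcal{Q}(\mathcal{T}) = \mathcal{Q}(\operatorname{Ad}_g(\mathcal{T}))$.

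There is no genuine obstacle here; the statement follows by direct case analysis from the adjoint table. The only subtlety worth flagging is the restriction to the subalgebra generated by $\{\mathscr{S}_2,\mathscr{S}_3,\mathscr{S}_4\}$: if $\operatorname{Ad}_{\exp(\varepsilon \mathscr{S}_1)}$ were allowed, then $\omega_2^\star = e^\varepsilon \omega_2$ would still preserve $\operatorname{sgn}(\omega_2)$, but this is outside the claim and need not be addressed. I would close the argument by noting that extending from generators to the full subgroup is immediate because each generator preserves the two conditions $\omega_1 = 0$ and $\operatorname{sgn}(\omega_2)$ that jointly determine $\mathcal{Q}$.
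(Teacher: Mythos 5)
Your proof is correct and follows essentially the same route as the paper: a direct case analysis reading the transformed coefficients off Table \ref{t2} and checking each generator separately. If anything, you are slightly more careful than the paper --- you explicitly dispose of the $\omega_1 \neq 0$ branch by noting $\omega_1$ is fixed, and you keep strictly to the generators $\{\mathscr{S}_2,\mathscr{S}_3,\mathscr{S}_4\}$ named in the statement, whereas the paper also verifies the (unclaimed but true) invariance under $\operatorname{Ad}_{\exp(\varepsilon\mathscr{S}_1)}$ via $\omega_2 \mapsto e^{\varepsilon}\omega_2$.
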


\begin{proof}
As shown in Table \ref{t2}, the coefficient $\omega_2$ corresponding to the generator $\mathscr{S}_2$ remains unchanged under the adjoint transformations $\operatorname{Ad}_{\exp(\varepsilon \mathscr{S}_i)}$ for $i = 3, 4$. Therefore, it is sufficient to verify the invariance of $\mathcal{Q}$ under the adjoint actions for $i = 1,2$.

In the case $i = 2$, the condition $\mathcal{Q}(\mathcal{T}) = \mathcal{Q}(\operatorname{Ad}_g(\mathcal{T}))$ is directly satisfied. For $i = 1$, the transformation maps $\omega_2$ to $e^{\varepsilon} \omega_2$. Since this transformation preserves the sign of $\omega_2$, the function $\mathcal{Q}$ remains invariant under the action. This completes the proof.
\end{proof}
\begin{lem}
Let the function $\mathcal{R}: \mathcal{L}^4 \rightarrow \mathbb{R}$ be defined with respect to the basis $\{\mathscr{S}_i \mid 1 \leq i \leq 4\}$. Then $\mathcal{R}$ is invariant under the adjoint action within the subalgebra generated by $\{\mathscr{S}_2, \mathscr{S}_3, \mathscr{S}_4\}$, i.e., $
\mathcal{R}(\mathcal{T}) = \mathcal{R}(\operatorname{Ad}_g(\mathcal{T})),$
where
\[
\mathcal{R} = 
\begin{cases}
\operatorname{sgn}(\omega_4), & \text{if } \omega_1=\omega_2=\omega_3 = 0, \\
0, & \text{otherwise}.
\end{cases}
\]
\end{lem}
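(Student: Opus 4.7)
The plan is to mirror the structure of Lemma \ref{LEM1}: verify that the defining condition $\omega_1=\omega_2=\omega_3=0$ is preserved by each generator of the relevant subalgebra, and then check that, under this condition, the value $\operatorname{sgn}(\omega_4)$ is also preserved. If either of those fails, both sides $\mathcal{R}(\mathcal{T})$ and $\mathcal{R}(\operatorname{Ad}_g(\mathcal{T}))$ are $0$ by definition, so invariance is automatic.

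First I would read off Table \ref{t2} restricted to the rows for $\operatorname{Ad}_{\exp(\varepsilon \mathscr{S}_i)}$ with $i=2,3,4$. In every such row, $\omega_1^\star=\omega_1$ and $\omega_3^\star=\omega_3$, while $\omega_2^\star$ is either $\omega_2$ (for $i=3,4$) or $\omega_2-\varepsilon\omega_1$ (for $i=2$). Under the assumption $\omega_1=\omega_2=\omega_3=0$, all three transformed coefficients remain zero, so the defining region $\{\omega_1=\omega_2=\omega_3=0\}$ is $\operatorname{Ad}$-stable.

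Next I would inspect $\omega_4^\star$ inside this region: the table gives $\omega_4^\star=\omega_4-\varepsilon\omega_3$ for $\mathscr{S}_2$, $\omega_4^\star=\omega_4+\varepsilon\omega_2$ for $\mathscr{S}_3$, and $\omega_4^\star=\omega_4-\varepsilon\omega_1$ for $\mathscr{S}_4$. In each case $\omega_4$ itself is unaltered once $\omega_1=\omega_2=\omega_3=0$, hence $\operatorname{sgn}(\omega_4^\star)=\operatorname{sgn}(\omega_4)$ and $\mathcal{R}(\mathcal{T})=\mathcal{R}(\operatorname{Ad}_g(\mathcal{T}))$. For the complementary case in which at least one of $\omega_1,\omega_2,\omega_3$ is nonzero, I would verify that the stability argument above shows the transformed triple $(\omega_1^\star,\omega_2^\star,\omega_3^\star)$ also cannot simultaneously vanish, so $\mathcal{R}$ takes the value $0$ before and after the adjoint action.

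Since every step reduces to direct inspection of Table \ref{t2} and the transformations under $\mathscr{S}_2,\mathscr{S}_3,\mathscr{S}_4$ fix $\omega_4$ on the relevant locus, I do not anticipate any real obstacle; the only subtlety worth mentioning is to justify that stating invariance over the subalgebra $\langle\mathscr{S}_2,\mathscr{S}_3,\mathscr{S}_4\rangle$ is the sharp statement — under $\mathscr{S}_1$ one would have $\omega_4^\star=e^{\varepsilon}\omega_4$, which still preserves the sign, so in fact the same proof extends to all of $\mathcal{L}^4$, a remark I would add at the end for completeness.
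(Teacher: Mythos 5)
Your proposal is correct and follows essentially the same route as the paper, which simply proves this lemma ``by analogy with'' the preceding one, i.e., by direct inspection of Table \ref{t2}; you have just written out explicitly the case-checking that the paper leaves implicit. Your closing remark that the sign of $\omega_4$ is also preserved under $\operatorname{Ad}_{\exp(\varepsilon\mathscr{S}_1)}$ (where $\omega_4\mapsto e^{\varepsilon}\omega_4$), so that the invariance in fact extends to all of $\mathcal{L}^4$, is a valid and worthwhile observation not made in the paper.
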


\begin{proof}
The proof follows by analogy with Lemma (\ref{LEM1}).
\end{proof}
\subsection{Construction of Optimal System}
\quad \quad To construct an optimal system of the system \eqref{TM} utilizing the approach mentioned in \cite{hu2015direct}, we consider $\mathbb{A}=\sum_{i=1}^{4} \omega_{i} \mathscr{S}_{i}$ and $\mathbb{B}=\sum_{i=1}^{4} q_{i} \mathscr{S}_{i}$ as two elements of $\mathcal{L}^4$. The model's adjoint transformation equation is given by
\begin{equation}\label{urv}
\left(q_{1}, q_{2}, q_{3}, q_{4}\right)=\left(\omega_{1}, \omega_{2}, \omega_{3}, \omega_{4}\right) \mathcal{K} \text{.}
\end{equation}

Further, $\mathcal{K}\left(\varepsilon_{1}, \varepsilon_{2},\varepsilon_{3} ,\varepsilon_{4}\right)$ gives a transformed $\mathcal{T}$ as follows:
\begin{equation}\label{424}
\begin{split}
\mathcal{K}\left(\varepsilon_{1}, \varepsilon_{2}, \varepsilon_{3} ,\varepsilon_{4},\right) \cdot \mathcal{T}=\hspace{0.1cm} & \omega_{1} \mathscr{S}_{1}+\left(-\omega_{1} \varepsilon_{2} +\omega_{2} \right) e^{\varepsilon_{1}} \mathscr{S}_{2}+\omega_{3} \mathscr{S}_{3}+ \left(-\omega_{1} \varepsilon_{4}+\omega_{2} \varepsilon_{3}- \varepsilon_{2}\omega_{3}+\omega_{4}\right) e^{\varepsilon_{1}} \mathscr{S}_{4} \text{.} 
    \end{split}
    \end{equation}
We evaluate the subsequent four cases following the invariants' degree:
$$
\begin{aligned}
\mathcal{R}_{1}=\left\{\omega_{1}=0,
\omega_{3}=1\right\} \text{,} \hspace{0.2cm} \mathcal{R}_{2}=\left\{\omega_{1}=1,\omega_{3}=0\right\}\text{,} \hspace{0.2cm} \mathcal{R}_{3}=\left\{\omega_{1}=l_{1}, \omega_{3}=l_{2}\right\} \text{,}\hspace{0.2cm} \mathcal{R}_{4}=\left\{\omega_{1}=0, \omega_{3}=0\right\} \text {, }\hspace{0.1cm} \text{with} \hspace{0.1cm} l_{1}, l_{2} \neq 0\text{.}
\end{aligned}
$$

Let us consider the case $\mathcal{R}_{1}: \omega_{1}=0\text{,} \hspace{0.1cm} \omega_{3}=1\text{.}$ Choosing a representative element
$
\hat{\mathcal{T}}=\omega_{2} \mathscr{S}_{2}+ \mathscr{S}_{3} +\omega_{4} \mathscr{S}_{4} \text {,} 
$
and substitute $q_{i}=0\text{,} \hspace{0.2cm} \text{for} \hspace{0.2cm} i=1,2,4 \hspace{0.2cm} \text{and}\hspace{0.2cm} q_{3}=1$ in Eqs. \eqref{urv} and \eqref{424} we get the solution as $\varepsilon_{2}=\omega_{4}$.
Thus, the action of adjoint maps $\operatorname{Ad}_{\exp \left(\varepsilon_{2} \mathscr{S}_{2}\right)}$ will eliminate the coefficients of $X_{2}$, from $\hat{\mathcal{T}}$ and 
we get $\hat{\mathcal{T}}=\mathscr{S}_{3}+\omega_{4} \mathscr{S}_{4}
$. Finally, the action of \(\operatorname{Ad}_{\exp \left(\varepsilon_{1} \mathscr{S}_{1}\right)}\) on \(\hat{\mathcal{T}}\) reduces the general element \(\mathcal{T}\) to the form \(\hat{\mathcal{T}} = \mathscr{S}_{3} + b \mathscr{S}_{4}\), where \(b \in \{-1, 0, 1\}\), thereby preventing any further reduction.
Similarly, the one-dimensional optimal subalgebras for the remaining cases can be determined using the same procedure, and the results are summarized in the following theorem:

\begin{thm}
The one-dimensional optimal subalgebras for the Lie algebra $\mathbb{L}^{4}$ of the system (\ref{TM}) are as follows:
$$
\begin{aligned}
&\mathcal{T}_{1, b} = \mathscr{S}_{3} + b \mathscr{S}_{4}\text{,}\\ &\mathcal{T}_{2, b} = \mathscr{S}_{1} + b \mathscr{S}_{4}\text{,}\\
&\mathcal{T}_{3, l_1, l_2, b} = l_{1} \mathscr{S}_{1} + l_{2} \mathscr{S}_{3} + b \mathscr{S}_{4}\text{,} \\
&\mathcal{T}_{4, b} = \mathscr{S}_{2} + b \mathscr{S}_{4}\text{,}
\end{aligned}
$$
where $b$ is a parameter with $b\in \{-1, 0, 1\}$. In addition, $l_1, l_2 \neq 0$ are arbitrary constants.   
\end{thm}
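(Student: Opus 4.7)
The plan is to proceed by systematic case analysis on the four subcases $\mathcal{R}_1$--$\mathcal{R}_4$ already isolated, which together exhaust all configurations of the adjoint-invariant coefficients $\omega_1$ and $\omega_3$. Their invariance is immediate from \eqref{424}: inspection of $\mathcal{K}\cdot\mathcal{T}$ shows that the coefficients of $\mathscr{S}_1$ and $\mathscr{S}_3$ are unchanged under every adjoint action, and this is reinforced by the Killing-form invariant $K(\mathscr{S},\mathscr{S}) = 2\omega_1^{2}$ and by the $\operatorname{Ad}$-invariant subalgebra generated by $\mathscr{S}_1$ and $\mathscr{S}_3$. Because case $\mathcal{R}_1$ has been worked out in the text, the remaining task is to carry out the analogous reduction for $\mathcal{R}_2,\mathcal{R}_3,\mathcal{R}_4$ and to certify that the four resulting families lie in pairwise distinct adjoint orbits.

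In each remaining case I would begin with the generic $\mathcal{T}$ satisfying the prescribed constraints on $(\omega_1,\omega_3)$, apply the full transformation \eqref{424} to express the new coefficients $q_2$ and $q_4$ in terms of $\varepsilon_1,\ldots,\varepsilon_4$, and then solve for the $\varepsilon_j$ that annihilate as many of the $q_j$ as possible, reserving the scaling parameter $\varepsilon_1$ for a final normalization of any surviving nonzero coefficient to an element of $\{-1,0,+1\}$. For $\mathcal{R}_2$ ($\omega_1=1,\omega_3=0$) the equation $q_2=(\omega_2-\varepsilon_2)e^{\varepsilon_1}=0$ is solved by $\varepsilon_2=\omega_2$, after which $\varepsilon_1$ rescales the residual $\mathscr{S}_4$-coefficient to yield $\mathcal{T}_{2,b}$. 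For $\mathcal{R}_3$ ($\omega_1=l_1,\omega_3=l_2$, both nonzero) the analogous choice $\varepsilon_2=\omega_2/l_1$ eliminates $q_2$, and a final rescaling produces $\mathcal{T}_{3,l_1,l_2,b}$. Case $\mathcal{R}_4$ ($\omega_1=\omega_3=0$) is the most restrictive: the transformed coefficients collapse to $q_2=\omega_2 e^{\varepsilon_1}$ and $q_4=(\omega_2\varepsilon_3+\omega_4)e^{\varepsilon_1}$, so only $\varepsilon_3$ (when $\omega_2\neq 0$) and $\varepsilon_1$ remain active, and the resulting normalization produces $\mathcal{T}_{4,b}$.

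The main obstacle is not the mechanical production of representatives — essentially a replay of the scheme exhibited for $\mathcal{R}_1$ — but the demonstration that members of distinct families cannot be transformed into one another by any admissible composition of adjoint actions. This is where the earlier lemmas become indispensable: the adjoint invariance of the pair $(\omega_1,\omega_3)$, equivalently of the Killing form $K=2\omega_1^{2}$ together with the $\mathscr{S}_3$-coefficient, immediately separates the four cases, while the sign-valued invariants $\mathcal{Q}$ of Lemma~\ref{LEM1} and $\mathcal{R}$ discriminate among the residual values $b\in\{-1,0,+1\}$ within any case in which an honest sign invariant survives. Careful bookkeeping of how each invariant behaves under \eqref{424} then closes the argument and certifies the \emph{minimality} of the listed optimal system.
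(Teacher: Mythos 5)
Your proposal follows essentially the same route as the paper: the same decomposition into the cases $\mathcal{R}_1$--$\mathcal{R}_4$ governed by the invariant pair $(\omega_1,\omega_3)$, the same elimination of coefficients via the adjoint matrix $\mathcal{K}$ in \eqref{424}, and the same certification of distinctness through the invariant functions $K,\mathcal{M},\mathcal{N},\mathcal{P},\mathcal{Q},\mathcal{R}$ tabulated in Table \ref{t4}. The only difference is one of emphasis: the paper's formal proof of the theorem consists solely of the distinctness check via Table \ref{t4}, with the construction of representatives carried out for $\mathcal{R}_1$ in the preceding text and merely asserted for the remaining cases, whereas you spell out those skipped reductions explicitly.
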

\begin{proof}
It is sufficient to prove the theorem by demonstrating that the $\mathscr{T}_i^{'} s$ are distinct.  To achieve this, we will construct a table that illustrates the invariants in Table \ref{t4}. An analysis of the contents of Table \ref{t4} reveals that the subalgebras are different.
    \renewcommand{\arraystretch}{1.8}
\begin{table}[H]
    \centering
    \begin{tabular}{c|cccccc}
    \hline \hline
      & $K$ & $\mathcal{M}$ & $\mathcal{N}$ & $\mathcal{P}$ & $\mathcal{Q}$ & $\mathcal{R}$ \\\hline \hline
 $\mathcal{T}_{1,b}(\mathscr{S}_3+b \mathscr{S}_4)$  & 0 & 0 & 1 & 1 & 0 & 0\\
$\mathcal{T}_{2,b}(\mathscr{S}_1+b \mathscr{S}_4)$  & 2 & 1 & 0 & 1 & 0 & 0\\
$\mathcal{T}_{3,1,1,b}(\mathscr{S}_1+\mathscr{S}_3+b \mathscr{S}_4)$  & 2 & 1 & 1 & 1 & 0 & 0\\
 $\mathcal{T}_{4,b}(\mathscr{S}_2+b \mathscr{S}_4)$  & 0 & 0 & 0 & 1 & 1 & 0\\
        \hline \hline
    \end{tabular}
    \caption{Table for Invariant Function Evolution.}
    \label{t4}
\end{table}
\end{proof}
\section{Similarity Reduction and Group Invariant Solutions}
\subsection{Invariant Solutions via Optimal System}
\subsubsection{\texorpdfstring{$\mathcal{T}_{3} = l_{1} \mathscr{S}_{1} + l_{2} \mathscr{S}_{3} + b \mathscr{S}_{4}$}{T3 = l1 S1 + l2 S3 + b S4}}
The characteristic equation can be written as:
\begin{equation}
    \frac{d x}{l_1 x+ l_2 t+b}=\frac{d t}{l_1 t}=\frac{d \rho}{-l_1 \rho}=\frac{d u}{l_2}\text{,}
\end{equation}
which, on solving, gives the similarity variables as follows:
\begin{equation}\label{C3}
  \tau=\frac{-l_2 t \ln(t)+ l_1 x+b}{l_1 t},\hspace{0.2cm} \rho=\frac{\mathcal{Y}(\tau)}{t},\hspace{0.2cm} u=\frac{l_2 \ln(t)}{l_1}+\mathcal{Z}(\tau) \text{.}
\end{equation}
Substituting the Eq. \eqref{C3} in Eq. \eqref{TM}, we get the following ODEs:
\begin{equation}
    \begin{split}
& \mathcal{Y}(\mathcal{Z}^{'} -1)+\mathcal{Y}^{'}( \mathcal{Z}-\frac{l_2}{l_1}-\tau)=0\text{,}\\
& \frac{l_2}{l_1}+\mathcal{Z}^{'}(\mathcal{Z}-\frac{l_2}{l_1}-\tau)+\frac{1}{\mathcal{Y}}(A \mathcal{Y}^{'}-D \mathcal{Z}^{''})=0\text{.}
    \end{split}
\end{equation}
For $l_1=1, l_2=1$, and $D=0$, we get the solutions as follows:
\begin{equation}\label{o3}
    \mathcal{Z}(\tau)=\tau+1, \quad \mathcal{Y}(\tau)= p_1 e^{\frac{-\tau}{A}},
\end{equation}
where $p_1$ is an arbitrary constant.
From Eqs. \eqref{o3} and \eqref{C3}, we get the solutions as follows:
\begin{equation}\label{E533}
\rho=\frac{p_1}{t}e^{\frac{t \ln(t)-x-b}{t A}}, \quad u=\frac{x+b}{t}+1\text{.}    
\end{equation}
Similarly, the solutions for the remaining cases can be derived using the same approach, as presented in Table \ref{OST}.
\renewcommand{\arraystretch}{2.5}
\begin{table}[H]
    \centering
    \begin{tabular}{>{\centering\arraybackslash}m{2.5cm} >{\centering\arraybackslash}m{3.5cm} >{\centering\arraybackslash}m{6cm} >{\centering\arraybackslash}m{4.5cm}}    
    \hline \hline
    \textbf{Subalgebras} & \textbf{Similarity Variables} & \textbf{Reduced Equations} & \textbf{Invariant Solutions}  \\
    \hline \hline
     $\mathcal{T}_{1} = \mathscr{S}_{3} + b \mathscr{S}_{4}$ & 
    $ \tau=t,\hspace{0.1cm} \rho=\mathcal{Y}(\tau),$ \vspace{1em} \newline$u=\frac{x}{\tau+b}+\mathcal{Z}(\tau)$ & 
    $\mathcal{Z}_{\tau}+\frac{\mathcal{Z}}{\tau+b}=0$, \vspace{1em}\newline $\mathcal{Y}_{\tau}+\frac{\mathcal{Z}}{\tau+b}=0$ & $\rho=\frac{p_2}{t+b},$ \vspace{1em} \newline$ u=\frac{x+p_1}{t+b}$\\\\
    $\mathcal{T}_{2} = \mathscr{S}_{1} + b \mathscr{S}_{4}$, & $ \tau=\frac{x+b}{t}$ \vspace{1em} \newline $\rho=\frac{\mathcal{Y}(\tau)}{t}, \hspace{0.1cm} u=\mathcal{Z}(\tau)$ & $\mathcal{Y}^{'} \mathcal{Z}+\mathcal{Y} \mathcal{Z}^{'}-\mathcal{Y}^{'}\tau-\mathcal{Y}=0$, \vspace{1em} \newline $\mathcal{Y} \mathcal{Z}^{'} \tau+\mathcal{Y} \mathcal{Z}\mathcal{Z}^{'}+A \mathcal{Y}^{'}-D \mathcal{Z}^{''}=0$ & For $D=0$,\vspace{1em} \newline $\rho=\frac{2 p_1}{-(x+b)+\sqrt{(x+b)^2-4 A t^2}}$, \vspace{1em} \newline$u=\frac{x+b+\sqrt{(x+b)^2-4 A t^2}}{2 t}$ \\ \\
  $\mathcal{T}_{4} =\mathscr{S}_{2} + b\mathscr{S}_{4}$ & $ \tau=x-\frac{b}{t}$,\vspace{1em} \newline $\rho=\mathcal{Y}(\tau),\hspace{0.1cm} u=\mathcal{Z}(\tau)$ & $-D\mathcal{Z}^{''}+\mathcal{Y} \mathcal{Z}^{'}(\mathcal{Z}-b)+A \mathcal{Y}^{'}=0$, \vspace{1em} \newline$\mathcal{Y} \mathcal{Z}^{'} +\mathcal{Y}^{'} \mathcal{Z}-b\mathcal{Y}^{'}=0$ & For $D=0$, $\rho=\frac{p_1}{\sqrt{A}}$,\vspace{1em} \newline $u=b+\sqrt{A}$\\
    \hline \hline
    \end{tabular}
    \caption{Similarity Reductions and Invariant Solutions.}
    \label{OST}
\end{table}
\subsection{Invariant Solutions via Parameter Analysis}
\begin{equation}
    \frac{dx}{\mathscr{F}_{x}}= \frac{dt}{\mathscr{F}_{t}}= \frac{d\rho}{\mathscr{F}_{\rho}}= \frac{d u}{\mathscr{F}_{u}}\text{.}
\end{equation}
\subsubsection{\texorpdfstring{$e_1=0$}{e1=0}}
The similarity variables for this case are derived as follows:
\begin{equation}\label{C5}
  \tau=\frac{e_3 t^2}{2}+e_4 t-e_2 x,\hspace{0.2cm} \rho=\mathcal{Y}(\tau),\hspace{0.2cm} u=\frac{e_3 t}{e_2}+\mathcal{Z}(\tau)\text{.}
\end{equation}
Substituting the Eq. \eqref{C5} in Eq. \eqref{TM}, we get the following ODEs:
\begin{equation}
    \begin{split}
& \frac{e_3}{e_2}+e_4\mathcal{Z}^{'}-e_2 \mathcal{Z} \mathcal{Z}^{'}-\frac{e_2 A \mathcal{Y}^{'}}{\mathcal{Y}}-\frac{e_2^2 D \mathcal{Z}^{''}}{\mathcal{Y}}\text{,}\\
& e_2 \mathcal{Y} \mathcal{Z}^{'} + e_2 \mathcal{Y}^{'} \mathcal{Z}-e_4\mathcal{Y}^{'}=0\text{.}
    \end{split}
\end{equation}
For $D=A=0$, we get the solutions of the above equations as follows:
\begin{equation}\label{o5}
    \mathcal{Z}(\tau)=\frac{e_4-\sqrt{e_4^2+2 e_3 p_2 +2 e_3 \tau}}{e_2}, \quad \mathcal{Y}(\tau)=\frac{p_1}{\sqrt{e_4^2+2 e_3 p_2 +2  e_3 \tau}},
\end{equation}
where $p_1$ and $p_2$ are arbitrary constants.
From Eqs. \eqref{o5} and \eqref{C5}, we get solutions as follows:
\begin{equation}\label{522}
\rho=\frac{p_1}{\sqrt{2 e_3 p_2+(e_3 t+e_4)^2 -2 e_2 e_3 x}}, \quad u=\frac{e_3 t}{e_2}+\frac{e_4-\sqrt{2 e_3 p_2+(e_3 t+e_4)^2 -2 e_2 e_3 x}}{e_2}\text{.}    
\end{equation}
\subsubsection{\texorpdfstring{$e_3=0$}{e3=0}}
The similarity variables for this case are derived as follows:
\begin{equation}\label{C6}
  \tau=\frac{e_1 x+e_4}{e_1 t+
  e_2},\hspace{0.2cm} \rho=\frac{\mathcal{Y}(\tau)}{e_1 t+e_2},\hspace{0.2cm} u=\mathcal{Z}(\tau)\text{.}
\end{equation}
Substituting the Eq. \eqref{C6} in Eq. \eqref{TM}, we get the following ODEs:
\begin{equation}
    \begin{split}
& -\tau \mathcal{Z}^{'}+ \mathcal{Z} \mathcal{Z}^{'}+\frac{A \mathcal{Y}^{'}}{\mathcal{Y}}-\frac{e_1 D \mathcal{Z}^{''}}{\mathcal{Y}}\text{,}\\
& \mathcal{Y} \mathcal{Z}^{'} + \mathcal{Y}^{'} \mathcal{Z}-\tau \mathcal{Y}^{'}-\mathcal{Y}=0\text{.}
    \end{split}
\end{equation}
For $D=A=0$, we get the solutions of the above equations as follows:
\begin{equation}\label{o6}
    \mathcal{Z}(\tau)=\frac{\tau+\sqrt{\tau^2-4 A}}{2}, \quad \mathcal{Y}(\tau)=\frac{2 p_1}{-\tau+\sqrt{\tau^2-4A}},
\end{equation}
where $p_1$ is an arbitrary constant.
From Eqs. \eqref{o6} and \eqref{C6}, we get solutions as follows:
\begin{equation}
\rho=\frac{2 p_1}{-(e_1 x+e_4)+\sqrt{(e_1 x+e_4)^2-4 A (e_1 t+e_2)^2}}, \quad u=\frac{e_1 x+e_4+\sqrt{(e_1x+e_4)^2-4 A (e_1 t+e_2)^2}}{2 (e_1 t +e_2)}\text{.}    
\end{equation}

\section{Conservation-Based Exact Solutions}
In this section, we derive the exact solutions of the system \eqref{TM} using the conservation laws of mass and momentum \cite{avdonina2013exact,shagolshem2024exact}. The conservation is of the form
$$D_{t}(T^i)+D_{x}(C^i)=0,$$ for $i=1,2$.
We can rewrite the system \eqref{TM} for $D=0$ as follows:
\begin{equation}\label{SCL1}
\begin{split}
   &\mathbb{R}_{1}: \rho u_{x}+\rho_{x} u+ \rho_{t} \equiv D_{t}(\rho)+D_{x}(\rho u)=0,\\
   & \mathbb{R}_{2}: u_{t}+u u_{x}+\frac{A \rho_{x}}{\rho}\equiv D_{t}(\rho u)+D_{x}(\rho u^{2}+A \rho)=0\text{.}
\end{split}
\end{equation}
The corresponding conserved vector for the $\mathbb{R}_1$ has the components
$T^1=\rho$ and $C^1=\rho u$.
The differential constraints \eqref{SCL1} are written $D_{t}(\rho)=D_{x}(\rho u)=0$ and yield the following equations:
$$
\rho=M(x),\hspace{0.1cm} \rho u=N(t).
$$
Hence, we look for solutions of the form
\begin{equation}\label{SCL945}
    \rho=M(x),\hspace{0.1cm} u=\frac{N(t)}{M(x)}.
\end{equation}
Substituting the solutions from Eq. \eqref{SCL945} into the $\mathbb{R}_{2}$ of Eq. \eqref{SCL1}, we get the ODE as follows:
\begin{equation}\label{SCL946}
    M^{2} N^{'}-N^{2} M^{'}+A M^{2} M^{'}=0.
\end{equation}
Solution of the Eq. \eqref{SCL946} is as follows:
\begin{equation}
    N(t) = -\tanh\left(\sqrt{A} \frac{dM(x)}{dx} \frac{(c_1 + t)}{M(x)}\right) M(x) \sqrt{A}.
\end{equation}
So, from the above equation, we get 
\begin{equation}\label{CLS}
    u(x,t)= -\tanh\left(\sqrt{A}  \frac{\frac{dM(x)}{dx}(c_1 + t)}{M(x)}\right) \sqrt{A}, \hspace{0.3cm} \rho(x,t)=M(x).
\end{equation}
The above solutions (\ref{CLS}) are exact soliton-type solutions to the given hydrodynamic model (\ref{TM}). It's a kink-type soliton, but still localized and smooth. The solutions corresponding to different values of the function $M(x)$ are presented in Table \ref{CBES} as follows:
\renewcommand{\arraystretch}{2.0}
\begin{table}[H]
    \centering
    \begin{tabular}{c|c|c|c}
    \hline \hline
       \text{Sr. No.}& $M(x)$& $\rho(x,t)$ &$u(x,t)$ \\ \hline \hline
      1. & $\sin{x}$  & $\sin{x}$  & $-\sqrt{A} \tanh\left(\cot{x} (c_1 + t) \sqrt{A}  \right)$\\
        2. & $\sec{x}$ & $\sec{x}$ & $-\sqrt{A} \tanh\left( \tan{x} (c_1 + t) \sqrt{A} \right)$\\
       3. & $\cos{x}$ & $\cos{x}$ & $\sqrt{A} \tanh\left(\tan{x}(c_1 + t) \sqrt{A}  \right)$ \\
       4. & $e^{-x^2}$ & $e^{-x^2}$& $\sqrt{A} \tanh\left( 2x(c_1 + t) \sqrt{A}  \right)$ \\
       \hline\hline
    \end{tabular}
    \caption{Exact Solutions via Conserved Vector.}
    \label{CBES}
\end{table}
\section{Conserved Vectors}
\qquad Ibragimov \cite{ibragimov2006integrating, ibragimov2007new,ibragimov2011nonlinear} initially proposed the concept of nonlinear self-adjointness for the system of partial differential equations (PDEs).  This method is extremely efficient for systematically deriving conservation rules, regardless of the existence of a variational principle in the given system of partial differential equations (PDEs). This section demonstrates that the Eq. \eqref{TM} exhibits nonlinear self-adjointness.
\subsection{Nonlinear Self-adjointness }
\begin{thm}
The given model in Eq. \eqref{TM} has nonlinear self-adjointness.
\end{thm}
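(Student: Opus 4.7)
The plan is to verify nonlinear self-adjointness in Ibragimov's sense by constructing the formal Lagrangian, computing the adjoint system via Euler--Lagrange operators, and exhibiting explicit non-trivial substitutions for the adjoint variables that reduce the adjoint equations to linear combinations of the original equations $\mathbb{R}_1$ and $\mathbb{R}_2$.

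First I would introduce two new dependent variables $v_{1}(x,t)$ and $v_{2}(x,t)$ (the adjoint variables) and form the formal Lagrangian
\begin{equation*}
\mathcal{L} = v_{1}\bigl(\rho_{t} + u\rho_{x} + \rho u_{x}\bigr) + v_{2}\!\left(u_{t} + u u_{x} + \frac{A\rho_{x}}{\rho} - \frac{D u_{xx}}{\rho}\right).
\end{equation*}
Then I would compute the adjoint equations $\mathbb{R}_{1}^{*}=\delta\mathcal{L}/\delta\rho=0$ and $\mathbb{R}_{2}^{*}=\delta\mathcal{L}/\delta u=0$ using the Euler operators
\begin{equation*}
\frac{\delta}{\delta\rho} = \frac{\partial}{\partial\rho} - D_{t}\frac{\partial}{\partial\rho_{t}} - D_{x}\frac{\partial}{\partial\rho_{x}}, \qquad
\frac{\delta}{\delta u} = \frac{\partial}{\partial u} - D_{t}\frac{\partial}{\partial u_{t}} - D_{x}\frac{\partial}{\partial u_{x}} + D_{x}^{2}\frac{\partial}{\partial u_{xx}}.
\end{equation*}
This yields a pair of linear PDEs for $v_{1},v_{2}$ (with coefficients depending on $\rho,u$ and their derivatives through the $A/\rho$ and $D/\rho$ terms).

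Next I would impose the nonlinear self-adjointness condition: require the existence of substitutions $v_{i}=\phi_{i}(x,t,\rho,u)$, not simultaneously zero, and multipliers $\lambda_{ij}$ such that
\begin{equation*}
\mathbb{R}_{1}^{*}\big|_{v=\phi} = \lambda_{11}\mathbb{R}_{1} + \lambda_{12}\mathbb{R}_{2}, \qquad
\mathbb{R}_{2}^{*}\big|_{v=\phi} = \lambda_{21}\mathbb{R}_{1} + \lambda_{22}\mathbb{R}_{2}.
\end{equation*}
Equating coefficients of the independent monomials in the derivatives $\rho_{t},\rho_{x},u_{t},u_{x},u_{xx}$ produces a determining system for $\phi_{1},\phi_{2},\lambda_{ij}$. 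I would first try the simplest ansatz $\phi_{1}=\phi_{1}(\rho,u)$, $\phi_{2}=\phi_{2}(\rho,u)$, and, if that fails, enlarge to include $x,t$ dependence. A natural guess suggested by the structure of $\mathbb{R}_{1}$ (a pure conservation law) is $\phi_{1}=\text{const}$, $\phi_{2}=\text{const}$, or $\phi_{2}$ proportional to $u$ and $\phi_{1}$ proportional to a combination like $A\ln\rho-\tfrac{1}{2}u^{2}$, reflecting the Bernoulli-type first integral of the momentum equation.

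The hard part will be the algebra of matching coefficients after the viscous term $-Du_{xx}/\rho$ is differentiated twice; the $D$-dependent contributions produce terms involving $1/\rho^{2}$ and derivatives of $\rho$ that must cancel against the multiplier combinations on the right-hand side. I expect this to force a relation between $\phi_{2}$ and $1/\rho$ (so that $\phi_{2}/\rho$ contributes cleanly), and I anticipate the determining system collapsing to a small number of constraints admitting a finite-parameter family of substitutions. Once even one non-trivial $(\phi_{1},\phi_{2})$ is produced, the theorem follows by Ibragimov's definition of nonlinear self-adjointness, and the same substitution can be fed into the standard formula $T^{i}=\xi^{i}\mathcal{L} + (\eta^{\alpha}-\xi^{j}u^{\alpha}_{j})(\partial\mathcal{L}/\partial u^{\alpha}_{i}) + \cdots$ together with the Lie generators from Table~\ref{S1} to generate the conservation laws in the subsequent subsections.
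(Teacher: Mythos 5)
Your plan is methodologically identical to the paper's proof: both form the formal Lagrangian $L = h\,\mathbb{R}_1 + g\,\mathbb{R}_2$ with two adjoint variables, compute the adjoint system with the Euler--Lagrange operators, impose the substitution condition $\left.\frac{\delta L}{\delta u}\right|_{h=H,\,g=G} = l_1\mathbb{R}_1 + l_2\mathbb{R}_2$, $\left.\frac{\delta L}{\delta \rho}\right|_{h=H,\,g=G} = l_3\mathbb{R}_1 + l_4\mathbb{R}_2$, and fix the multipliers by matching coefficients of $u_t$ and $\rho_t$ (which gives $l_1=-g_\rho$, $l_2=-g_u$, $l_3=-h_\rho$, $l_4=-h_u$). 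So there is no difference of route.

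The gap is that your proposal stops exactly where the proof begins to have content. The theorem is an existence claim, and proving it requires actually exhibiting a non-trivial pair $(H,G)$; "I anticipate the determining system collapsing to a small number of constraints" is the step that must be carried out, not deferred. Moreover, the candidate ansätze you propose to try first are not the ones the determining system selects: the paper obtains
\begin{equation*}
h = c_1 u - \frac{c_1 A}{\rho} + c_3, \qquad g = c_1(\rho+u) + c_2,
\end{equation*}
i.e.\ the multiplier of the continuity equation acquires an $A/\rho$ correction (not an $A\ln\rho - \tfrac{1}{2}u^2$ Bernoulli combination), and the multiplier of the momentum equation is affine in $\rho+u$ rather than proportional to $1/\rho$. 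Your intuition that the viscous term forces $\phi_2 \sim 1/\rho$ is in fact the opposite of what happens: the $D$-dependent contributions $D_x^2(-Dg/\rho)$ generate $\rho_{xx}$ and $\rho_x^2$ terms that cannot be absorbed into $l_1\mathbb{R}_1+l_2\mathbb{R}_2$ at all, so they must cancel among themselves; this is a constraint you would need to resolve explicitly rather than guess around. Until the explicit substitution is produced and verified against the full residual (including all second-order terms coming from the viscous part), the theorem is not proved.
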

\begin{proof}
The adjoint equation associated with \eqref{TM} can be formulated following the methodology explained in \cite{ibragimov2011nonlinear}
\begin{equation}
\begin{split}
&S_1=\frac{\delta L}{\delta u}= \frac{1}{\rho^{3}}[D g \rho \rho_{xx}-D g_{xx}\rho^{2}-2 D g \rho_{x}^{2}+2 D g_{x} \rho \rho_{x}-\rho^{3}(h_{x} \rho + g_{x} u+g_{t})]\text{,}\\
&S_2=\frac{\delta L}{\delta \rho}=\frac{1}{\rho^2}[-h_{x} u \rho^2-A g_{x} \rho+D g u_{xx}-h_{t} \rho^{2}]\text{,}\\
\end{split}
\end{equation}
where $L$ denotes the formal Lagrangian expressed as $L=h(x,t) \mathbb{R}_{1}+g(x,t) \mathbb{R}_{2}$, where $g(x,t)$ and $h(x,t)$ are the dependent variables. The Euler-Lagrange operators $\frac{\delta }{\delta u}$ and $\frac{\delta }{\delta \rho}$ pertaining to $u$ and $\rho$ are as follows:
\begin{equation}
\begin{split}
&\frac{\delta}{\delta u} = \frac{\partial}{\partial u} - D_t \frac{\partial}{\partial u_t} - D_x \frac{\partial}{\partial u_x} + D_x^2 \frac{\partial}{\partial u_{xx}} + D_x D_t \frac{\partial}{\partial u_{xt}} + D_t^2 \frac{\partial}{\partial u_{tt}} + \cdots \text{,}\\
&\frac{\delta}{\delta \rho} = \frac{\partial}{\partial \rho} - D_t \frac{\partial}{\partial \rho_t} - D_x \frac{\partial}{\partial \rho_x} + D_x^2 \frac{\partial}{\partial \rho_{xx}} + D_x D_t \frac{\partial}{\partial \rho_{xt}} + D_t^2 \frac{\partial}{\partial \rho_{tt}} + \cdots \text{,}
\end{split}
\end{equation}
where $D_{x}$ and $D_{t}$ denote the total differential operators with regard to $x$ and $t$, respectively, as defined below:
\begin{equation}
   \begin{split}
      & D_x = \frac{\partial}{\partial x} + w^{i}_x \frac{\partial}{\partial w^{i}} + w^{i}_{xx} \frac{\partial}{\partial w^{i}_x} + w^{i}_{xy} \frac{\partial}{\partial w^{i}_y} + w^{i}_{xt} \frac{\partial}{\partial w^{i}_t} + \cdots ,\\
& D_t = \frac{\partial}{\partial t} + w^{i}_t \frac{\partial}{\partial w^{i}} + w^{i}_{tt} \frac{\partial}{\partial w^{i}_t} + w^{i}_{xt} \frac{\partial}{\partial w^{i}_x} + w^{i}_{yt} \frac{\partial}{\partial w^{i}_y} + \cdots \text{,}
   \end{split} 
\end{equation}
where $w^{1}=u$ and $w^{2}=\rho$.
The given Eq. \eqref{TM} is nonlinearly self-adjoint \cite{ibragimov2011nonlinear} if it satisfies the following condition: 
\begin{equation}\label{NS1}
\begin{split}
&S_{1} \mid_{(h=H(x,t,u,\rho)), g=G(x,t,u,\rho)} = l_{1} \mathbb{R}_{1}+l_2 \mathbb{R}_{2} \text{,}\\
&S_{2} \mid_{(h=H(x,t,u,\rho)), g=G(x,t,u,\rho)} = l_{3} \mathbb{R}_{1}+l_4 \mathbb{R}_{2} \text{,}
\end{split}
\end{equation}
where $H(x,t,u,\rho)$ and $G(x,t,u,\rho)$ are nonzero functions, $l_1, l_2, l_3$ and $l_4$ are unknown to be determined by equating coefficient of $u_{t}$ and $\rho_{t}$ to zero. We obtain
\begin{equation}
    l_{1}=-g_{\rho},\quad l_2=-g_{u}, \quad l_3=-h_{\rho}, \quad l_4=-h_{u}\text{.}
\end{equation}
Substituting the value of $l_{1}, l_2, l_3$ and $l_4$ into \eqref{NS1}, one can obtain
\begin{equation}\label{NS2}
h=c_1 u-\frac{c_1A}{\rho}+c_3 \text{,} \hspace{0.2cm} \text{and} \hspace{0.2cm} g=(\rho+u)c_{1}+c_{2}\text{,}
\end{equation}
where $c_{i}, i=1,2,3$ are the arbitrary constants. Hence, the given Eq. \eqref{TM} is nonlinearly self-adjoint under \eqref{NS1}.
\end{proof}
\subsection{Conservation Laws and their Applications}
\quad \quad Conservation laws are essential in modeling systems where certain physical quantities remain constant over time. In PDEs, they relate to integrability and help analyze the existence, uniqueness, and stability of solutions, while also guiding the development of accurate numerical methods. Moreover, conservation laws can lead to the introduction of nonlocal variables, which in turn generate a related nonlocal PDE system useful for exploring nonlocal symmetries \cite{sil2020nonlocal, liu2020existence, shagolshem2023classification,friedrich2022conservation} and hidden structures within the original system.
\subsection{Deriving Conservation Laws via Nonlinear Self-Adjointness}
\quad \quad Ibragimov's \cite{ibragimov2011nonlinear} formalism provides an explicit framework for generating conserved vectors corresponding to admitted symmetries of a differential system, yielding a specific subclass of conservation laws. In contrast, the multipliers method developed by Anco and Bluman \cite{bluman2010local} offers a more general and systematic procedure, capable of recovering the complete set of local conservation laws. Nevertheless, due to the apparent inapplicability of the multipliers approach to the present governing equations, we adopt Ibragimov’s nonlinear self-adjointness method to systematically construct conservation laws associated with each Lie point symmetry.\\
Let $\Upsilon=(\Upsilon^{x}, \Upsilon^{t})$ be the conserved vector corresponding to $x$ and $t$, satisfying the conservation form
\begin{equation}
    D_{x} \Upsilon^{x}+D_{t} \Upsilon^{t}=0\text{,}
    \end{equation}
and generated by the infinite symmetry 
\begin{equation}
    \mathbb{P}=\mathscr{F}_{x}\frac{\partial}{\partial x}+\mathscr{F}_{t}\frac{\partial}{\partial t}+\mathscr{F}_{\rho}\frac{\partial}{\partial \rho}+\mathscr{F}_{u} \frac{\partial}{\partial u}\text{,}
\end{equation}
where the conserved vectors $\eta^{x},\eta^{y}$ and $\eta^{t}$ can be obtained from ideas developed in \cite{ibragimov2011nonlinear}
\begin{equation}\label{e766}
\begin{split}
& \Upsilon^{x}=\mathscr{F}_{x} L+\omega^1 \frac{\partial L}{\partial \rho_{x}}+\omega^2 \frac{\partial L}{\partial u_{x}}-\omega^2 D_{x} (\frac{\partial L}{\partial u_{xx}})+\frac{\partial L}{\partial u_{xx}}D_{x}(\omega^2)\text{,}\\
& \Upsilon^{t}=\mathscr{F}_{t} L+\omega^{1} \frac{\partial L}{\partial \rho_{t}}+\omega^{2} \frac{\partial L}{\partial u_{t}}\text{,}
\end{split}
\end{equation}
with $\omega^1=\mathscr{F}_{\rho}-\mathscr{F}_{x}\rho_{x}-\mathscr{F}_{t}\rho_{t}$ and $\omega^2=\mathscr{F}_{u}-\mathscr{F}_{x} u_{x}-\mathscr{F}_{t} u_{t}$. Furthermore, we determine the conserved vectors for the infinitesimal generators $\mathscr{S}_{1}, \mathscr{S}_{2}, \mathscr{S}_{3}$, and $\mathscr{S}_{4}$ in the following cases:\\

 \renewcommand{\arraystretch}{1.5}
 \begin{table}[H]
		\resizebox{\columnwidth}{!}{%
			\begin{tabular}{c|c}
				\hline \hline
                \textbf{Vector} & \textbf{Conserved Vector}  \\ \hline \hline
\multirow{3}{*} & $\Upsilon^{x}=\frac{D (c_1 \rho +c_1 u+c_2)}{\rho}(u_{x}+x u_{xx}+t u_{tx})+(x u_{x}+t u_{t})(-\frac{c_1 D u_{x}}{\rho}+\frac{D \rho_{x}}{\rho^2}(c_1 u+c_2)+(c_1 u -\frac{c_1 A}{\rho}+c_3)\rho+$\\
$\mathscr{S}_{1}=x \partial_{x}+t \partial_{t}-\rho \partial_{\rho}$ & $(c_1 u+c_1 \rho +c_2)u)-(\rho+x\rho_{x}+t\rho_{t})((c_1 u+c_3)u+\frac{(c_1 \rho +c_2) A}{\rho})$\\ 
& $\Upsilon^{t}=-(c_1(\rho+u)+c_2)(x u_{x}+t u_{t})-(c_1 u-\frac{A c_1}{\rho}+c_3)(\rho+x\rho_{x}+t\rho_{t})$\\ \hline \hline
\multirow{3}{*} & $\Upsilon^{x}=\frac{D \left((\rho + u)c_1 + c_2 \right) u_{tx}}{\rho} +\left( -\frac{c_1 D u_{x}}{\rho}+\frac{D (c_1 u+ c_2) \rho_{x}}{\rho^2}-\left( c_1 u - \frac{A c_1}{\rho} + c_3 \right) \rho - \left( \left( \rho + u \right) c_1 + c_2 \right) u \right)u_{t}$\\
$\mathscr{S}_{2}=\partial_{t}$ & $- \rho_{t}\left( \left( c_1 u+ c_3 \right) u + \frac{\left(c_1 \rho+ c_2 \right) A}{\rho} \right)$\\ 
& $\Upsilon^{t}=-(c_1(\rho+u)+c_2)u_{t}-(c_1 u-\frac{A c_1}{\rho}+c_3)\rho_{t}$\\ \hline \hline
\multirow{3}{*} & $\Upsilon^{x}=\frac{D  \left((\rho + u)c_1 + c_2 \right) t u_{xx}}{\rho} -\left( -\frac{c_1 D u_{x}}{\rho}+\frac{D (c_1 u+ c_2) \rho_{x}}{\rho^2}+\left( c_1 u - \frac{A c_1}{\rho} + c_3 \right) \rho + \left( \left( \rho + u \right) c_1 + c_2 \right) u \right)(1-t u_{x})$\\
$\mathscr{S}_{3}=t\partial_{x}+\partial_{u}$ & $- t \rho_{x}\left( \left( c_1 u+ c_3 \right) u + \frac{\left(c_1 \rho+ c_2 \right) A}{\rho} \right)$\\
& $\Upsilon^{t}=(c_1(\rho+u)+c_2)(1-t u_{x})-(c_1 u-\frac{A c_1}{\rho}+c_3)t \rho_{x}$\\ \hline \hline
\multirow{3}{*} & $\Upsilon^{x}=\frac{D \left((\rho + u)c_1 + c_2 \right) u_{xx}}{\rho} +\left( -\frac{c_1 D u_{x}}{\rho}+\frac{D (c_1 u+ c_2) \rho_{x}}{\rho^2}-\left( c_1 u - \frac{A c_1}{\rho} + c_3 \right) \rho - \left( \left( \rho + u \right) c_1 + c_2 \right) u \right)u_{x}$\\
$\mathscr{S}_{4}=\partial_{x}$ & $- \rho_{x}\left( \left( c_1 u+ c_3 \right) u + \frac{\left(c_1 \rho+ c_2 \right) A}{\rho} \right)$\\
& $\Upsilon^{t}=-(c_1(\rho+u)+c_2)u_{x}-(c_1 u-\frac{A c_1}{\rho}+c_3)\rho_{x}$\\ \hline \hline
\end{tabular}%
}
\caption{Table for Conserved Vectors.}
\label{CVT}
\end{table}
The conserved vectors $\Upsilon^{x}, \Upsilon^{t}$ are non-trivial for the vectors $\mathscr{S}_i$ for $i=1,2,3,4$.
\section{Results and Discussion}
\begin{enumerate}
    \item 
\begin{figure}[H]
    \centering
    \subfloat[3D graph of $\rho$.]{\includegraphics[width=0.45\textwidth]{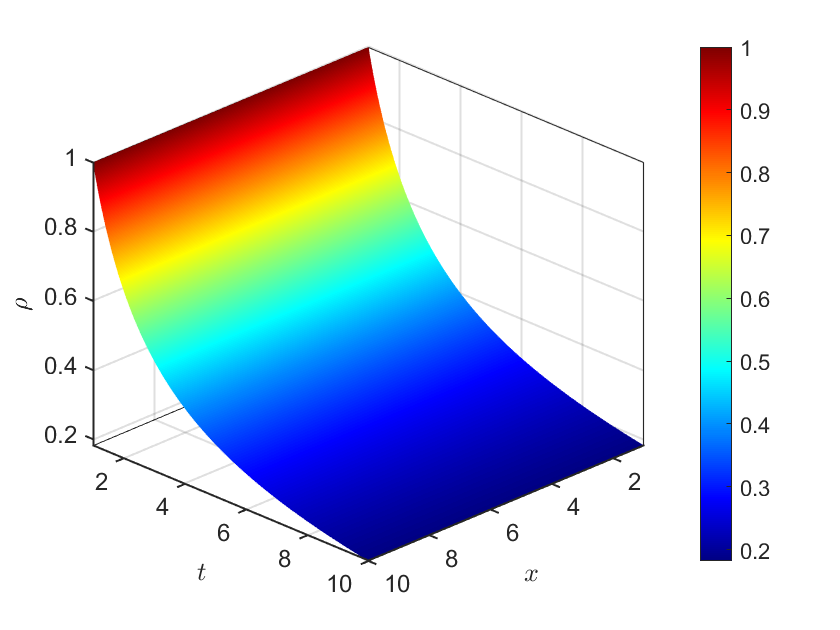}\label{fig:sub_TM1b}}
    \quad
    \subfloat[3D graph of $u$.]{\includegraphics[width=0.45\textwidth]{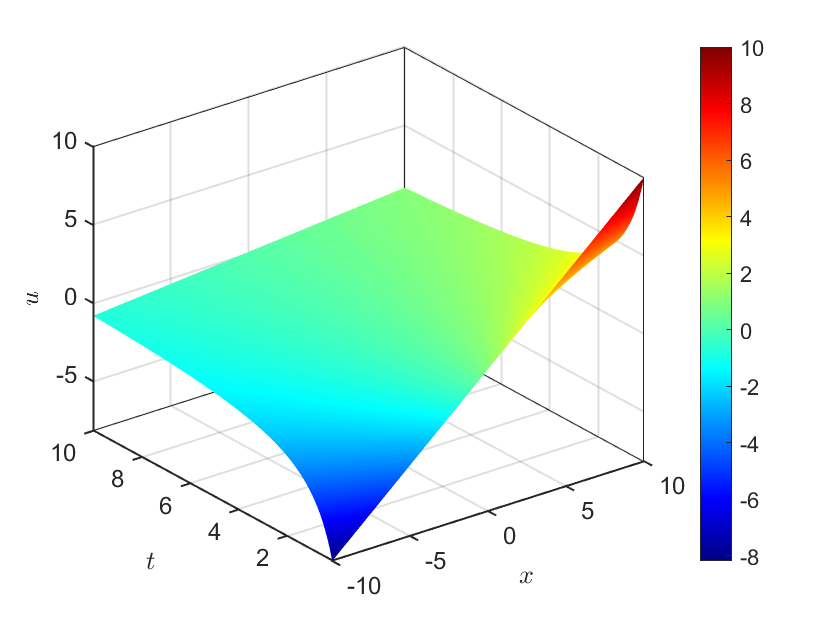}\label{fig:sub_TM1a}}
    \caption{Solution profiles of density $\rho$ and velocity $u$ at $p_1=1$, $p_2=2$ and $b=1$ for subalgebra $\mathcal{T}_{1}$ from Table \ref{OST}.}
    \label{fig:solution_profile}
\end{figure}
\begin{enumerate}[label=(\alph*)]
\item Figure \ref{fig:solution_profile}\subref{fig:sub_TM1b} represents the time-dependent decrease in vehicle density $(\rho)$, similar to the easing of traffic after rush hour. Initially, high density decreases over time, simulating the dispersion of vehicles and transitioning toward free-flow conditions, as one might observe on a highway network after peak usage. This pattern reflects the basic principle that, as time passes, an initially congested area will naturally spread out, reducing local density.
\item Figure \ref{fig:solution_profile}\subref{fig:sub_TM1a} illustrates a velocity profile $(u)$ that increases with position (x) but decreases with time $(t)$. This could represent a highway scenario where initial congestion near an entry point causes slower speeds. As vehicles move further from this point or as time progresses and congestion eases, speeds increase, mirroring the dynamics of traffic that gradually transitions from a bottleneck to a smoother, faster flow.
\end{enumerate}
\item 
\begin{figure}[H]
\centering
\subfloat[3D plot of $\rho$.]
   {\includegraphics[width=0.45\textwidth]{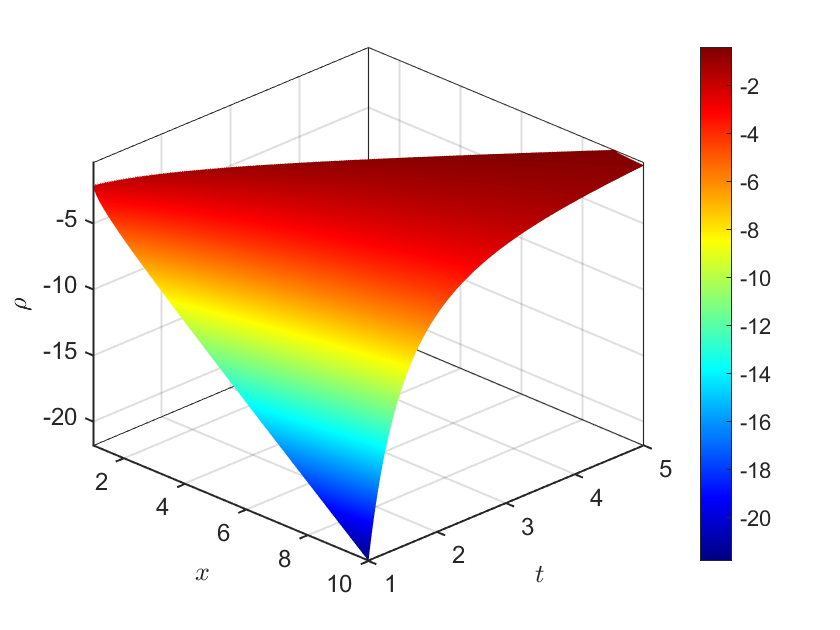}\label{fig:sub_TM2b}}
\quad
    \subfloat[3D plot of $u$.]
   {\includegraphics[width=0.45\textwidth]{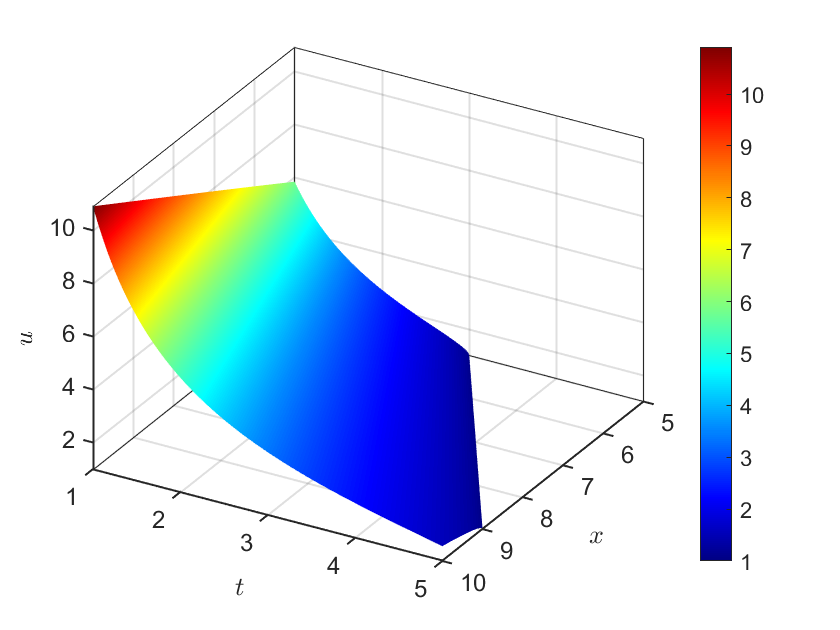}\label{fig:sub_TM2a}}
    \caption{Solution profiles of density $\rho$ and velocity $u$ at $b=1$, $A=1$ and $p_1=2$ for subalgebra $\mathcal{T}_{2}$ from Table \ref{OST}.}
    \label{F2}
\end{figure}
\begin{enumerate}[label=(\alph*)]
\item The graph \ref{F2}\subref{fig:sub_TM2b} illustrates a scenario in which the density decreases along the road $(x)$ and as time $(t)$ progresses. Initially, at the beginning of the road segment and at early times, the density of vehicles is high. However, as one moves further down the road and as time elapses, the density diminishes significantly. In real-world traffic, this could represent a situation where a large number of vehicles enter a highway. Then, they gradually disperse as they move toward their destinations. In addition, with time, the number of cars decreases on that road.
\item The graph \ref{F2}\subref{fig:sub_TM2a} shows an inverse relationship to the density plot. Initially, the speed is high at the beginning of the road and decreases dramatically over time and distance. This suggests that vehicles start at high speed. Then they quickly decelerate as they move further along the road. In real-world terms, this could represent cars entering a highway. They encounter increasing congestion, leading to reduced speeds and eventual slowdowns.
\end{enumerate}
\item 
\begin{figure}[H]   
\centering
 \subfloat[3D plot of $\rho$.]
{\includegraphics[width=0.45\textwidth]{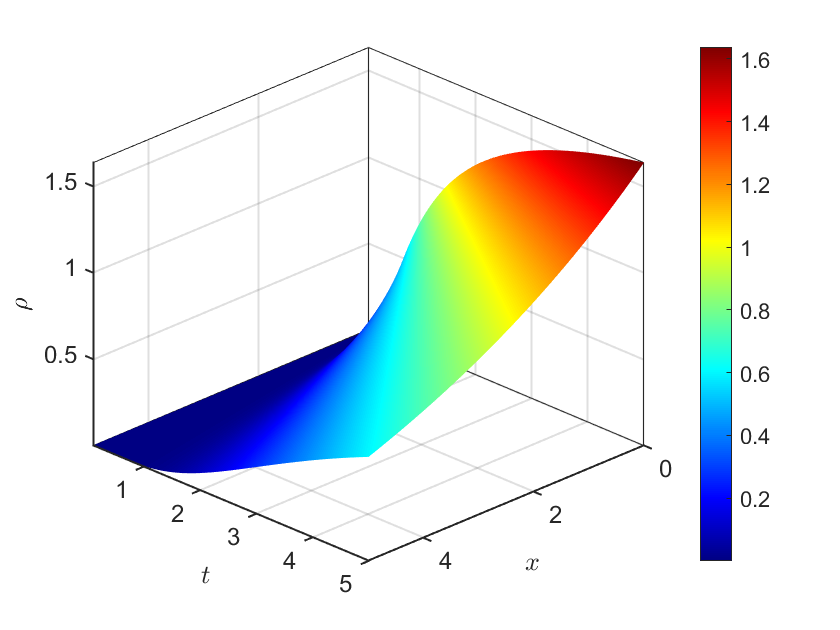}\label{fig:sub_TM3b}}
   \quad
     \subfloat[3D plot of $u$.]
{\includegraphics[width=0.45\textwidth]{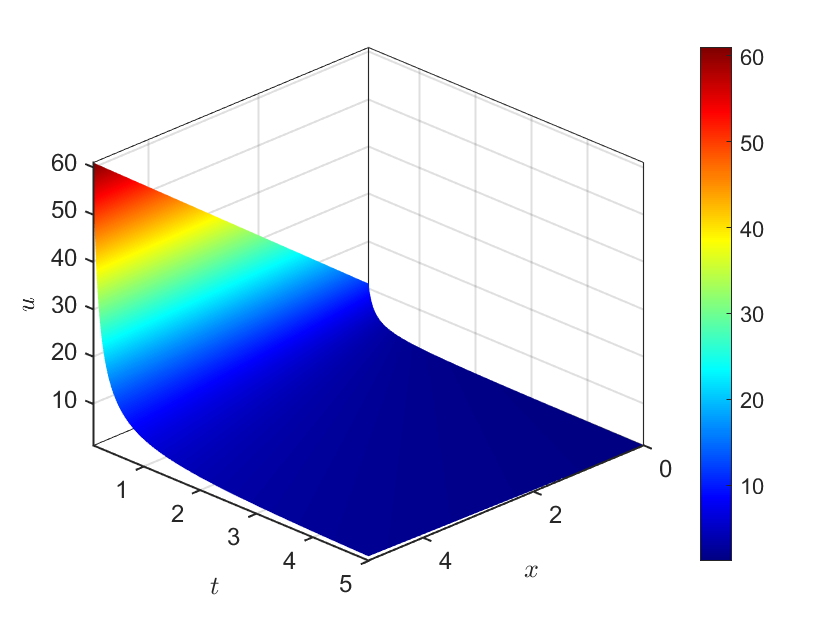}\label{fig:sub_TM3a}}
    \caption{Solution profiles of density $\rho$ and velocity $u$ at $p_1=2$, $A=1$ and $b=1$ for Eq. (\ref{E533}) .}
\end{figure}
\label{F3}
\begin{enumerate}[label=(\alph*)]
\item The graph \ref{F3}\subref{fig:sub_TM3b} illustrates how the density $\rho$ evolves over time $t$ and position $x$. Initially, the density is low near the origin, but it steadily increases as both $t$ and $x$ grow, forming a wave-like pattern. This behavior can be seen in real-life scenarios such as traffic jams, where the number of cars (density) increases in a region over time, or in fluid flow, where particles accumulate and the medium becomes denser as it moves forward.
\item The graph \ref{F3}\subref{fig:sub_TM3a} shows how the velocity $u$ changes with respect to time $t$ and position $x$. At the beginning (small $t$ and $x$), the velocity is very high, but it rapidly decreases and approaches a low, steady value as time and position increase. This pattern is similar to real-life traffic flow, where cars initially move quickly but slow down as congestion builds up over time and distance, or to a fluid that starts moving rapidly and then slows as it spreads out.
\end{enumerate}
\item
\begin{figure}[H]   
\centering
 \subfloat[3D plot of $\rho$.]
{\includegraphics[width=0.45\textwidth]{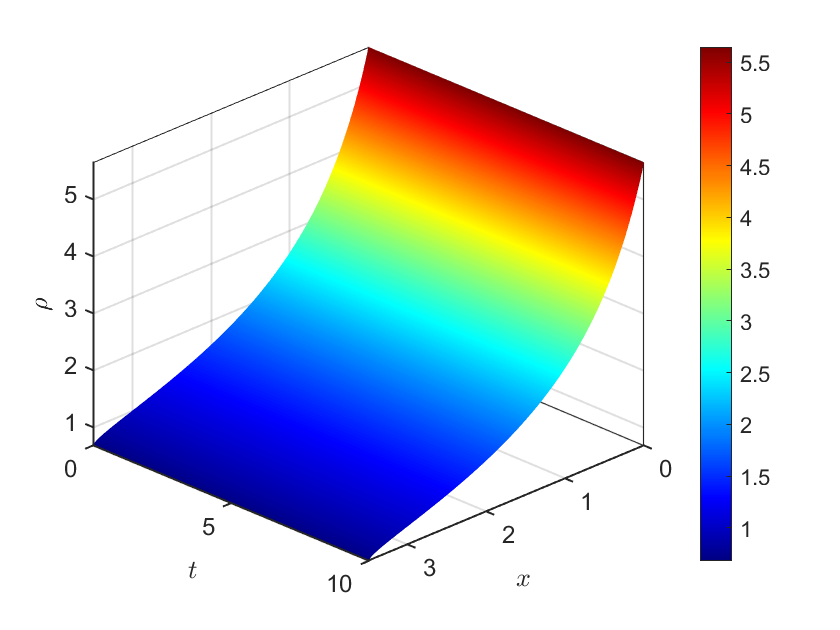}\label{fig:sub_TMP1b}}
   \quad
     \subfloat[3D plot of $u$.]
{\includegraphics[width=0.45\textwidth]{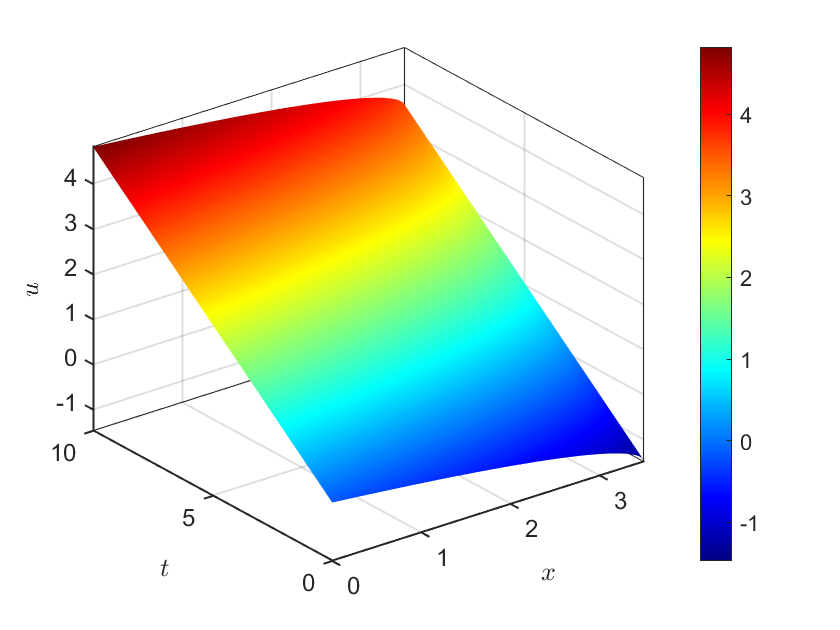}\label{fig:sub_TMP1a}}
    \caption{Solution profiles of density $\rho$ and velocity $u$ at $p_1=2$, $e_2=2$, $e_3=1, e_4=3$, and $p_2=1$ for Eq. (\ref{522}).}
    \label{F4}
\end{figure}
\begin{enumerate}[label=(\alph*)]
\item Figure \ref{F4}\subref{fig:sub_TMP1b} displays the vehicle density $(\rho)$ evolution in a non-viscous traffic flow model. The surface exhibits a gradient from high density (red) to low density (blue) across space and time. This pattern mirrors real-world scenarios where traffic congestion gradually dissipates after a bottleneck is removed, such as when an accident is cleared on a highway. The smooth transition from congested to free-flow conditions demonstrates the natural resolution of traffic density waves governed by the conservation principle in equation $\mathbb{R}_{1}$, providing valuable insights for traffic management strategies.
\item Figure \ref{F4}\subref{fig:sub_TMP1a} illustrates the velocity field $(u)$ corresponding to the density distribution, highlighting the inverse relationship between traffic density and speed. This reflects common stop-and-go traffic patterns where vehicles initially slow or stop when approaching congestion before accelerating as traffic thins. The model captures essential features of traffic waves observed on busy urban highways during rush hours, where speed fluctuations propagate upstream against the direction of travel, informing the development of adaptive traffic control systems.
 \end{enumerate}
 \item
 \begin{figure}[H]
    \begin{minipage}{.5\textwidth}
     \subfloat[3D plot of $\rho$]
   {\includegraphics[width=\textwidth]{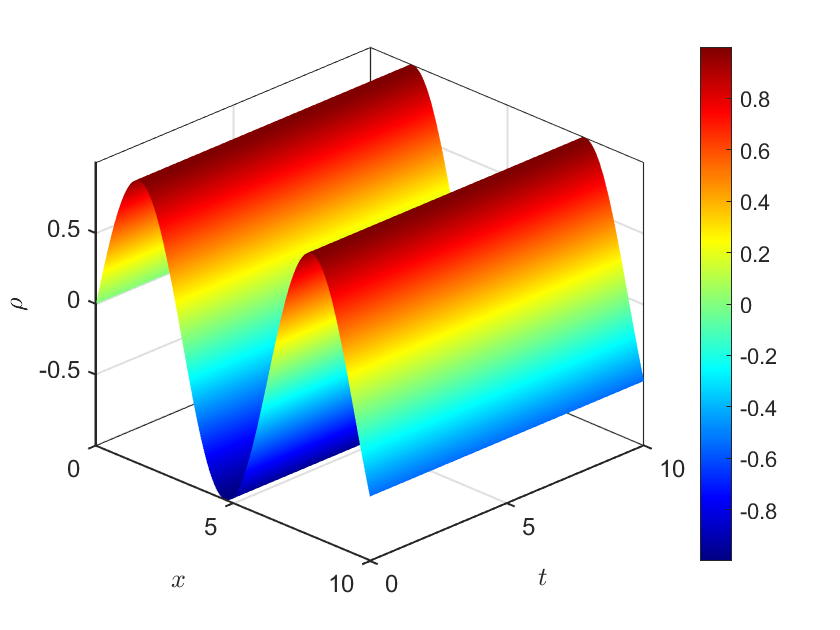}\label{CLp}}
\end{minipage}
\hfill    
\begin{minipage}{.5\textwidth}
 \subfloat[3D plot of $u$]
   {\includegraphics[width=\textwidth]{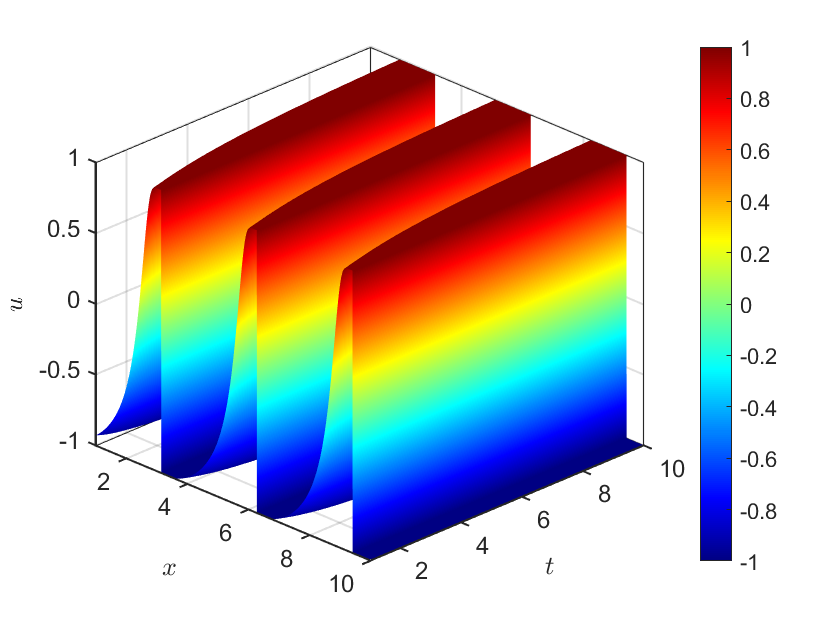}\label{Clu}}
\end{minipage}  
\caption{Plots of the exact solution 1 from Table \ref{CBES} for parameters $A=1$ and $c_1=1$.} 
\label{F9}
\end{figure}
\begin{enumerate}[label=(\alph*)]
\item Figure \ref{F9}\subref{CLp} displays the density profile, which oscillates smoothly between high (red) and low (blue) values, forming a periodic wave along the spatial axis. This represents a cnoidal wave soliton in density, modeling alternating clusters of high and low vehicle density-like a series of traffic jams and free-flowing sections moving along the highway. Such periodic density patterns are typical in real traffic, especially during rush hours or on ring roads with repeated congestion zones.
\item Figure \ref{F9}\subref{Clu} shows the velocity profile as a function of position and time. The sharp, periodic transitions from low (blue) to high (red) velocity form a repeating kink-like pattern along the road. This is a periodic kink (cnoidal) soliton, representing multiple moving fronts where traffic speed suddenly drops and then rises again. In real-life traffic, this model repeats stop-and-go waves, where vehicles repeatedly slow down and speed up, a common occurrence during heavy congestion or on circular roads.
\end{enumerate}
\item
 \begin{figure}[H]
    \begin{minipage}{.5\textwidth}
     \subfloat[3D plot of $\rho$]
   {\includegraphics[width=\textwidth]{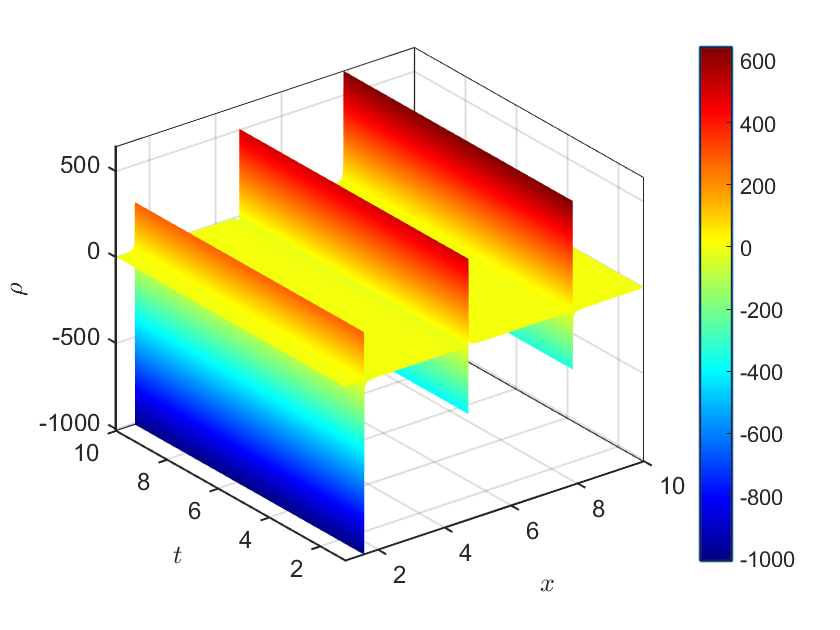}\label{CLp2}}
\end{minipage}
\hfill    
\begin{minipage}{.5\textwidth}
 \subfloat[3D plot of $u$]
   {\includegraphics[width=\textwidth]{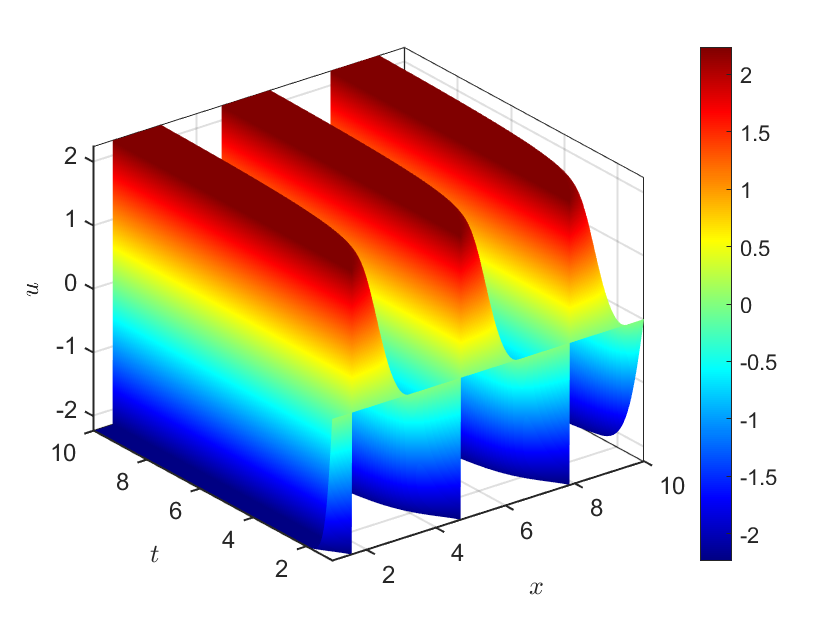}\label{Clu2}}
\end{minipage}  
\caption{Plots of the exact solution 2 from Table \ref{CBES} for parameters $A=5$ and $c_1=-1$.}
\label{F7}
\end{figure}
\begin{enumerate}[label=(\alph*)]
\item Figure \ref{F7}\subref{CLp2} displays the density as a periodic cnoidal soliton, with very sharp, repeating peaks and troughs. This represents recurring clusters of high and low vehicle density-like repeated traffic jams and clear stretches-moving along the highway. The soliton structure means these patterns are stable and persist over time, similar to real traffic waves during rush hour.
\item Figure \ref{F7}\subref{Clu2} shows the velocity as a periodic kink-type (cnoidal) soliton, with sharp transitions between high and low speeds repeating along the road. In real-life traffic, this model has multiple stop-and-go waves where cars repeatedly slow down and speed up, such as during heavy congestion or on ring roads.
\end{enumerate}
\item
 \begin{figure}[H]
    \begin{minipage}{.5\textwidth}
     \subfloat[3D plot of $\rho$]
   {\includegraphics[width=\textwidth]{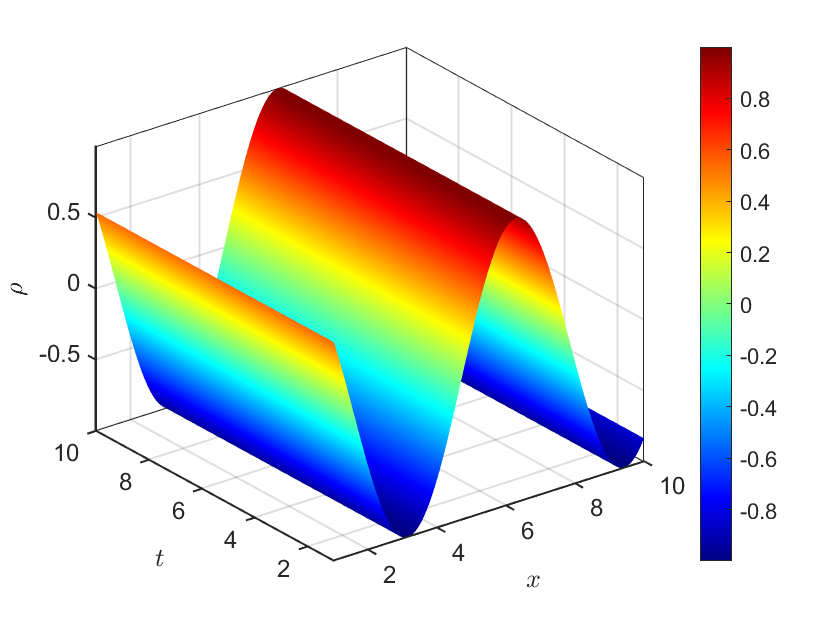}\label{CLp3}}
\end{minipage}
\hfill    
\begin{minipage}{.5\textwidth}
 \subfloat[3D plot of $u$]
   {\includegraphics[width=\textwidth]{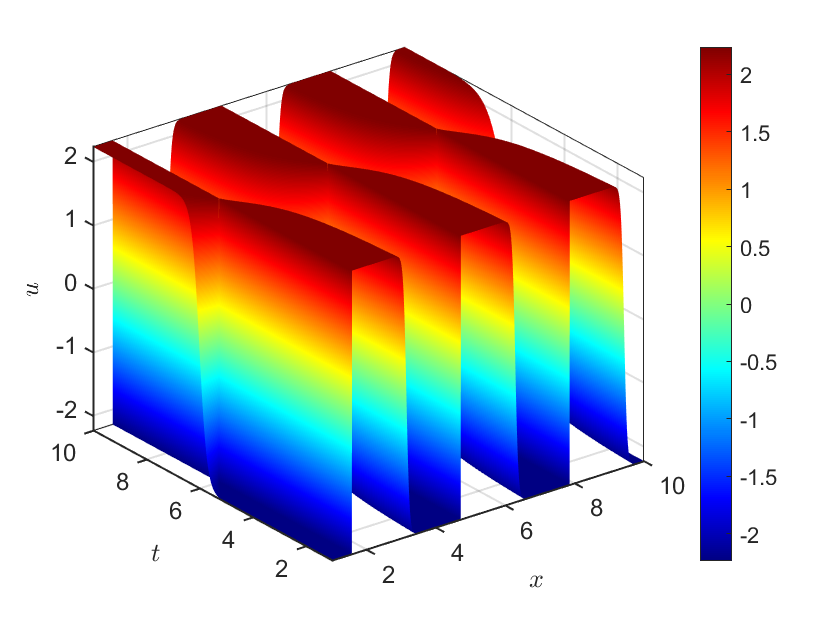}\label{Clu3}}
\end{minipage}  
\caption{Plots of the exact solution 3 from Table \ref{CBES} for parameters $A=5$ and $c_1=-6$.}
\label{F8}
\end{figure}
\begin{enumerate}[label=(\alph*)]
\item Figure \ref{F8}\subref{CLp3} shows that the density oscillates periodically between high (red) and low (blue) values, forming a wave-like pattern along the spatial axis. In real-life traffic, this could represent a series of repeating traffic jams and free-flowing regions along a highway, as if multiple congestion zones appear and move over time. The periodic nature suggests a cnoidal wave soliton, which is a type of soliton that repeats periodically rather than forming a single pulse or kink. Such patterns can occur during rush hours or on ring roads where traffic density naturally oscillates due to entry and exit flows.
\item Figure \ref{F8}\subref{Clu3} depicts the velocity profile, which shows sharp transitions between high (red) and low (blue) velocities, with these transitions repeating periodically along the spatial axis. This pattern reflects the presence of multiple moving fronts where traffic speed suddenly drops and then rises again, corresponding to the boundaries between congested and free-flowing regions. In real traffic, this could model several stop-and-go waves propagating through the flow, as often observed on busy highways with repeated bottlenecks. The sharp, stable transitions are characteristic of kink-type solitons, but the periodic repetition across space suggests a periodic kink or cnoidal soliton structure.
\end{enumerate}
\item
 \begin{figure}[H]
    \begin{minipage}{.5\textwidth}
     \subfloat[3D plot of $\rho$]
   {\includegraphics[width=\textwidth]{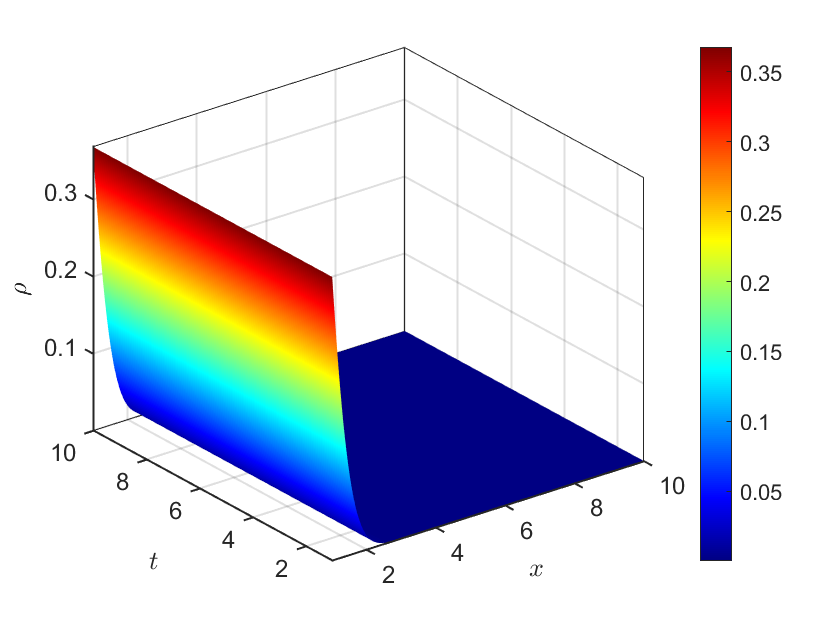}\label{CLp4}}
\end{minipage}
\hfill    
\begin{minipage}{.5\textwidth}
 \subfloat[3D plot of $u$]
   {\includegraphics[width=\textwidth]{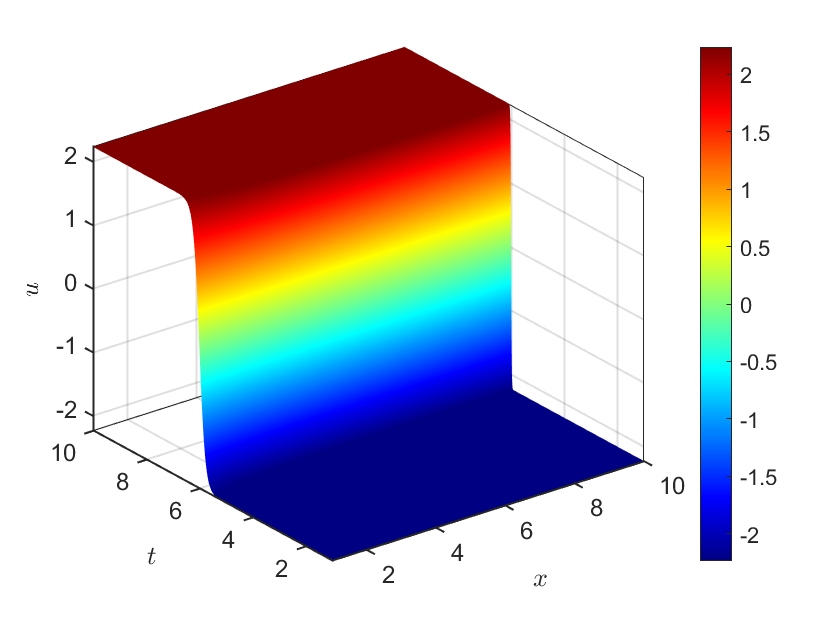}\label{Clu4}}
\end{minipage}  
\caption{Plots of the exact solution 4 from Table \ref{CBES} for parameters $A=5$ and $c_1=-6$.}
\label{F10}
\end{figure}
\begin{enumerate}[label=(\alph*)]
\item Figure \ref{F10}\subref{CLp4} shows the density $\rho(x,t)$, also as a kink-type soliton. The high-density region (red) moving through space and time represents a cluster of vehicle traffic jams traveling along the highway. As time progresses, this dense cluster moves, just like how actual traffic jams can propagate backward along a road. The soliton structure ensures this jam remains stable and self-organized as it travels, mirroring real-life congestion waves.
\item Figure \ref{F10}\subref{Clu4} shows the velocity profile $u(x,t)$ as a kink-type soliton. The sharp transition from low (blue) to high (red) velocity across space and time represents a sudden change in traffic speed. In real life, this model shows how a traffic jam suddenly clears and cars quickly accelerate. The soliton nature means this wave maintains its shape as it moves, just like persistent stop-and-go waves observed in real traffic.
\end{enumerate}
\end{enumerate}
\section{Weak Discontinuity Structures}
The equation \eqref{TM} can be expressed in matrix form as
\begin{equation}
    H_{t}+B H_{x}+ C H_{xx}= F,
\end{equation}
where $H=(\rho, u)^{T}$ and $F=(0,0)^{T}$ are column vectors, with the superscript $T$ indicating transposition, and
\begin{alignat*}{2}
B = &
\left[
\begin{array}{cc}
u & \rho \\
\frac{A}{\rho} & u
\end{array}
\right]\text{,} \qquad
C = &
\left[
\begin{array}{cc}
0 & 0\\
0 & \frac{-D}{\rho}
\end{array}
\right]\text{.}
\end{alignat*}
We will take $D=0$ for further calculation 
The characteristic roots associated with the matrix $B$ are 
\begin{equation}\label{862}
    \lambda_{1}= u-\sqrt{A} \hspace{0.2cm} \text{and} \hspace{0.2cm} \lambda_{2}=u+\sqrt{A}\text{.}
\end{equation}
The associated left and right characteristic eigenvectors are provided by
\begin{equation}\label{863}
\begin{split}
&l_{1}=(\frac{-\sqrt{A}}{\rho}, 1), \quad  r_{1}=(\frac{-\rho}{\sqrt{A}}, 1)^{T}, \quad l_{2}=(\frac{\sqrt{A}}{\rho}, 1), \quad  r_{2}=(\frac{\rho}{\sqrt{A}}, 1)^{T}\text{.}
    \end{split}
\end{equation}
\subsection{\texorpdfstring{$C^{1}$}{C1}-Wave}
\qquad Let us consider the $C^{1}$-Wave also referred to as the weak discontinuity, arising from $(x_0, t_0)$ and propagating along the fastest characteristic defined by $\frac{d x}{d t}=\lambda_2$, which precedes the shock $\mathcal{S}$. The propagation of weak discontinuity can be effectively analyzed using the following transport equation \cite{jeffrey1976quasilinear,sharma2025analysis}:
\begin{equation}\label{864}
    l_2\left(\frac{d K}{d t}+(H_{x}+K)(\nabla\lambda_2) K\right)+((\nabla l_2) K)^{T} \frac{d H}{d t}+(l_2 K)((\nabla \lambda_2)H_{x}+(\lambda_2)_{x})=0\text{,}
\end{equation}
where $K=\pi(t) r_2$ represents the discontinuity in $H_{x}$ along the weak discontinuity curve, where $\pi$ denotes the wave amplitude, $\nabla=(\frac{\partial}{\partial \rho}, \frac{\partial}{\partial u})$, and $\frac{d x}{d t}=\lambda_2$. The subsequent first-order linear equation is derived by simplifying Eq.
\eqref{864} using \eqref{862} and \eqref{863}
\begin{equation}\label{E865}
    \frac{d \pi}{d t}+\pi^2+\frac{\pi}{2}\left(\frac{\rho_{x} \sqrt{A}}{\rho}+5 u_{x}\right)=0\text{.}
\end{equation}
Using the solution of subalgebra $\mathcal{T}_1$ from Table \ref{OST}, we get the following equation as follows:
\begin{equation}\label{E865_2}
    \frac{d \pi}{d t}+\pi^2+\Psi(x,t)\pi=0, \hspace{0.2cm} \frac{d x}{d t}=\frac{x+p_1}{t+b}+\sqrt{A}\text{,}
\end{equation}
where 
\begin{equation}
    \Psi(x,t)=\frac{5}{2 (t+b)}\text{.}
\end{equation}
The solution of Equation (\ref{E865_2}) can be expressed in quadrature form:
\begin{equation}
\Pi(t) = \frac{E(t)}{1 + \Pi_0 F(t)} \hspace{0.1cm}\text{,} 
\end{equation}
where \( E(t) \) and \( F(t) \) are defined as follows:
\[
E(t) = \frac{1}{2}\exp\left( \int_{1}^{t} -\Psi(x(z), z) \, dz \right),
\quad
F(t) = \int_{1}^{t} \exp\left( \int_{1}^{z} -\Psi(x(y), y) \, dy \right) dz\text{,}
\]
and the term $\Pi_0$ signifies the initial amplitude. Examining the integrals $E(t)$ and $F(t)$ indicates that $E(t)$ approaches zero and $F(t)$ remains finite as $t$ approaches infinity. The solutions presented in the subalgebra $\mathcal{T}_{1}$ from Table \ref{OST} for $b=1$ and $A=1$ demonstrate that Figure \ref{LF}\subref{fig:sub_TMswa} shows the expansive wave behavior for $\Pi_0 > 0$, indicating that the wave finally diminishes and disappears after a finite duration. Furthermore, it is seen that the wave decreases more quickly as $\rho$ increases. In contrast, when $\Pi_0 < 0$ and $|\Pi_0| < \Pi_c$, where $\Pi_c = \lim\limits_{t \to \infty} \frac{1}{F(t)}$ is designated as the critical value, the wave persists as an expansive wave that ultimately diminishes and disappears, as illustrated in Figure \ref{LF}\subref{fig:sub_TMswb}. However, suppose $|\Pi_0|\ge \Pi_c$, that is, the initial discontinuity wave exceeds the critical threshold. In that case, the wave transforms into a shock after a finite duration, resulting in a compressive wave, as illustrated in Figure \ref{LF}\subref{fig:sub_TMswb}. A reduction in the factor extends the duration of shock production, signifying the accumulation of a substantial volume of fluid behind the frozen flow.
 \begin{figure}[H]
    \begin{minipage}{.49\textwidth}
     \subfloat[\footnotesize {}]
   {\includegraphics[width=\textwidth]{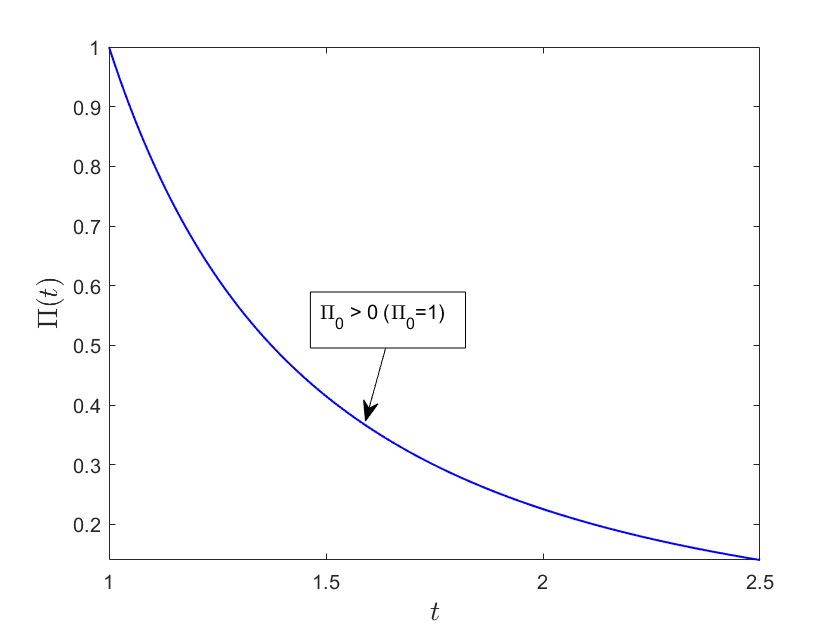}\label{fig:sub_TMswa}}
\end{minipage}
\hfill    
\begin{minipage}{.49\textwidth}
 \subfloat[\footnotesize {}]
   {\includegraphics[width=\textwidth]{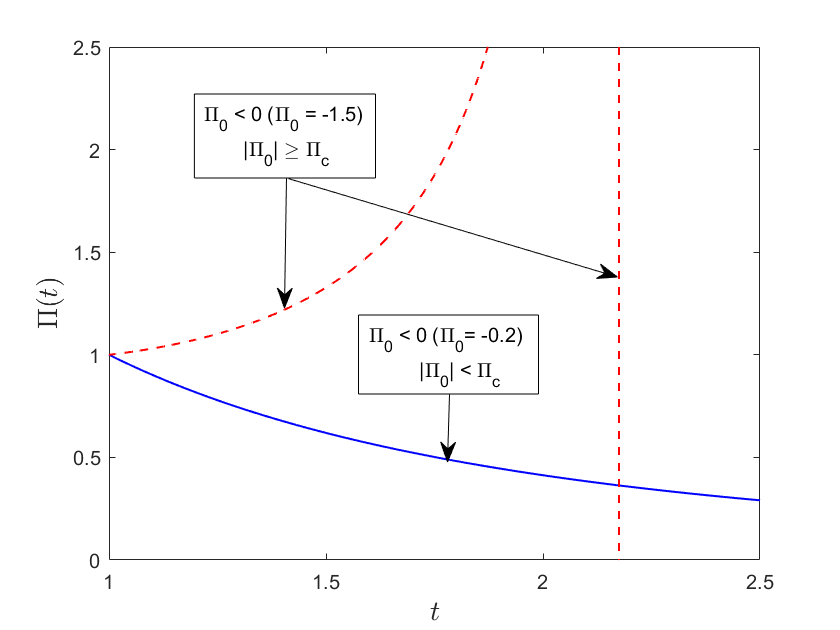}\label{fig:sub_TMswb}}
\end{minipage}  
\caption{Variation of $\Pi(t)$ with $b=1$ and $A=1$.}
\label{LF}
\end{figure}
\section{Conclusions}
\qquad This study successfully employs Lie symmetry analysis to a macroscopic traffic flow model, identifying infinitesimal generators and constructing an optimal system of one-dimensional subalgebras for effective symmetry reductions. Combining the creation of exact solutions with the direct technique of obtaining conservation laws ensures physically realistic findings. Analysis of the non-linear self-adjointness property strengthens the physical interpretation of these conservation rules. This work generates solitonic solutions, including parabola, kink-type, and peakon-type solitons, by applying traveling wave transformations, providing an understanding of nonlinear wave dynamics relevant to traffic evolution. Consider, for example, stop-and-go traffic patterns commonly observed during rush hour; these can be mathematically represented and better understood through these solitonic solutions (kink-type soliton). The impacts on traffic behavior of left-invariant solutions inside the given symmetry framework are clarified using their illumination of certain features of shock wave production and flow stability. The model allows us to analyze the impact of infrastructure changes on traffic. In essence, this work enhances the mathematical understanding of traffic dynamics, providing a solid theoretical basis applicable to practical scenarios. This foundation can guide actual congestion control plans, like traffic signal location and timing optimization, or highway lane addition or removal evaluation.
\subsection{Potential Application}
\qquad The exact solutions and solitonic wave structures obtained in this study have direct implications for traffic management. Kink, peakon, and parabola forms of solitonic waves help to simulate the development and spread of localized traffic congestion. Understanding these patterns can help in designing dynamic speed limits, ramp metering, and other control strategies to dissipate traffic jams before they escalate. The invariant and exact solutions provide benchmarks for real-time traffic monitoring and can inform the development of feedback control laws. These can be used to adjust signal timings, ramp gates, and connected vehicle behavior to maintain optimal traffic flow. Furthermore, supporting applications in Intelligent Transportation Systems (ITS) are the results, which enhance predictive traffic models, and data-driven control helps to manage congestion. Thus, the mathematical insights presented contribute to real-time congestion mitigation, travel time reliability, and more sustainable traffic systems.
\section*{Future Recommendation}
Future work should start with thorough empirical validation by calibrating the model with several real-world traffic datasets. To enhance model realism, it's essential to incorporate complex, external factors such as weather conditions, traffic incidents, and variations in driver behavior. Exploring higher-order traffic flow models that account for driver anticipation and differing vehicle types is also recommended. The investigation should be on the use of derived symmetries in creating real-time traffic control systems, such as ramp metering or varying speed limits. Finally, efforts should be directed at developing efficient numerical schemes for real-time applications, extending the analysis to include whole traffic networks, integrating machine learning techniques for improved predictive capability, and evaluating the environmental impact of traffic congestion for the best sustainable management.
\section*{Funding declaration}
The first author acknowledges financial support from the Ministry of Education (MoE). Aniruddha Kumar Sharma, the second author, expresses gratitude for the financial assistance provided by the Council of Scientific and Industrial Research (CSIR) India, Grant/Award Number: 09/0143(12918)/2021-EMR-I.

\section*{Author Contributions}
\quad \quad \textbf{Urvashi Joshi}: Led the conceptualization and design of the study, carried out the formal analysis and investigation, developed and implemented the software, performed data visualization, and prepared both the original manuscript draft and subsequent revisions.

\textbf{Aniruddha Kumar Sharma}: Provided methodological guidance, contributed to supervision, and assisted with manuscript review and editing.  

\textbf{Rajan Arora}: Contributing to the review and refinement of the manuscript.

\section*{Conflict of Interest}
The author's report states that there is no conflict of interest associated with this publication.
\section*{Data Availability Statement}
No data used in this manuscript.
\bibliographystyle{elsarticle-num}
\bibliography{ref.bib}

\begin{thebibliography}{10}
\expandafter\ifx\csname url\endcsname\relax
  \def\url#1{\texttt{#1}}\fi
\expandafter\ifx\csname urlprefix\endcsname\relax\def\urlprefix{URL }\fi
\expandafter\ifx\csname href\endcsname\relax
  \def\href#1#2{#2} \def\path#1{#1}\fi

\bibitem{horvat2015traffic}
R.~Horvat, G.~Kos, M.~{\v{S}}evrovi{\'c}, Traffic flow modelling on the road network in the cities, Tehni{\v{c}}ki vjesnik 22~(2) (2015) 475--486.

\bibitem{mohan2013state}
R.~Mohan, G.~Ramadurai, State-of-the art of macroscopic traffic flow modelling, International Journal of Advances in Engineering Sciences and Applied Mathematics 5 (2013) 158--176.

\bibitem{kanagaraj2013evaluation}
V.~Kanagaraj, G.~Asaithambi, C.~N. Kumar, K.~K. Srinivasan, R.~Sivanandan, Evaluation of different vehicle following models under mixed traffic conditions, Procedia-Social and Behavioral Sciences 104 (2013) 390--401.

\bibitem{van2015genealogy}
F.~van Wageningen-Kessels, H.~Van~Lint, K.~Vuik, S.~Hoogendoorn, Genealogy of traffic flow models, EURO Journal on Transportation and Logistics 4~(4) (2015) 445--473.

\bibitem{ferrara2018microscopic}
A.~Ferrara, S.~Sacone, S.~Siri, A.~Ferrara, S.~Sacone, S.~Siri, Microscopic and mesoscopic traffic models, Freeway traffic modelling and control (2018) 113--143.

\bibitem{aw2000resurrection}
A.~Aw, M.~Rascle, Resurrection of" second order" models of traffic flow, SIAM journal on applied mathematics 60~(3) (2000) 916--938.

\bibitem{aw2002derivation}
A.~Aw, A.~Klar, M.~Rascle, T.~Materne, Derivation of continuum traffic flow models from microscopic follow-the-leader models, SIAM Journal on applied mathematics 63~(1) (2002) 259--278.

\bibitem{richards1956shock}
P.~I. Richards, Shock waves on the highway, Operations research 4~(1) (1956) 42--51.

\bibitem{lighthill1955kinematic}
M.~J. Lighthill, G.~B. Whitham, On kinematic waves ii. a theory of traffic flow on long crowded roads, Proceedings of the royal society of london. series a. mathematical and physical sciences 229~(1178) (1955) 317--345.

\bibitem{paliathanasis2022lie}
A.~Paliathanasis, P.~G. Leach, Lie symmetry analysis of the aw--rascle--zhang model for traffic state estimation, Mathematics 11~(1) (2022) 81.

\bibitem{paliathanasis2023symmetry}
A.~Paliathanasis, Symmetry analysis for the 2d aw-rascle traffic-flow model of multi-lane motorways in the euler and lagrange variables, Symmetry 15~(8) (2023) 1525.

\bibitem{bluman2010applications}
G.~W. Bluman, Applications of symmetry methods to partial differential equations, Springer, 2010.

\bibitem{olver1993applications}
P.~J. Olver, Applications of Lie groups to differential equations, Vol. 107, Springer Science \& Business Media, 1993.

\bibitem{hussain2024lie}
A.~Hussain, M.~Usman, F.~Zaman, Lie group analysis, solitons, self-adjointness and conservation laws of the nonlinear elastic structural element equation, Journal of Taibah University for Science 18~(1) (2024) 2294554.

\bibitem{khoudari2024microscopic}
N.~Khoudari, From Microscopic to Macroscopic Scales: Traffic Waves and Sparse Control, Temple University, 2024.

\bibitem{shagolshem2024exact}
S.~Shagolshem, B.~Bira, K.~Nagaraja, Exact solutions, conservation laws, and shock wave propagation of two-lanes traffic flow model via lie symmetry, Physics of Fluids 36~(8) (2024).

\bibitem{goard2014symmetries}
J.~Goard, S.~Al-Nassar, Symmetries for initial value problems, Applied Mathematics Letters 28 (2014) 56--59.

\bibitem{al2012nonclassical}
S.~K. Al-Nassar, Nonclassical symmetry analysis of second order parabolic partial differential equations (2012).

\bibitem{zhang2010classical}
Z.~Zhang, Y.~Chen, Classical and nonclassical symmetries analysis for initial value problems, Physics Letters A 374~(9) (2010) 1117--1120.

\bibitem{sharma2025analysis}
A.~K. Sharma, S.~Shagolshem, R.~Arora, Analysis of wave propagation and conservation laws for a shallow water model with two velocities via lie symmetry, Communications in Nonlinear Science and Numerical Simulation (2025) 108637.

\bibitem{hu2015direct}
X.~Hu, Y.~Li, Y.~Chen, A direct algorithm of one-dimensional optimal system for the group invariant solutions, Journal of Mathematical Physics 56~(5) (2015).

\bibitem{avdonina2013exact}
E.~D. Avdonina, N.~H. Ibragimov, R.~Khamitova, Exact solutions of gasdynamic equations obtained by the method of conservation laws, Communications in Nonlinear Science and Numerical Simulation 18~(9) (2013) 2359--2366.

\bibitem{ibragimov2006integrating}
N.~H. Ibragimov, Integrating factors, adjoint equations and lagrangians, Journal of Mathematical Analysis and Applications 318~(2) (2006) 742--757.

\bibitem{ibragimov2007new}
N.~H. Ibragimov, A new conservation theorem, Journal of Mathematical Analysis and Applications 333~(1) (2007) 311--328.

\bibitem{ibragimov2011nonlinear}
N.~H. Ibragimov, Nonlinear self-adjointness and conservation laws, Journal of Physics A: Mathematical and Theoretical 44~(43) (2011) 432002.

\bibitem{sil2020nonlocal}
S.~Sil, T.~R. Sekhar, D.~Zeidan, Nonlocal conservation laws, nonlocal symmetries and exact solutions of an integrable soliton equation, Chaos, Solitons \& Fractals 139 (2020) 110010.

\bibitem{liu2020existence}
M.~Liu, H.~Dong, On the existence of solution, lie symmetry analysis and conservation law of magnetohydrodynamic equations, Communications in Nonlinear Science and Numerical Simulation 87 (2020) 105277.

\bibitem{shagolshem2023classification}
S.~Shagolshem, B.~Bira, Classification of nonlocal symmetries and exact solutions for 3$\times$ 3 chaplygin gas equation with conservation laws, Physics of Fluids 35~(5) (2023).

\bibitem{friedrich2022conservation}
J.~Friedrich, S.~G{\"o}ttlich, A.~Keimer, L.~Pflug, Conservation laws with nonlocality in density and velocity and their applicability in traffic flow modelling, in: XVI International Conference on Hyperbolic Problems: Theory, Numerics, Applications, Springer, 2022, pp. 347--357.

\bibitem{bluman2010local}
G.~W. Bluman, A.~F. Cheviakov, S.~C. Anco, G.~W. Bluman, A.~F. Cheviakov, S.~C. Anco, Local transformations and conservation laws, Applications of Symmetry Methods to Partial Differential Equations (2010) 1--120.

\bibitem{jeffrey1976quasilinear}
A.~Jeffrey, Quasilinear hyperbolic systems and waves,\hspace{0.05cm} Pitman Publishing, London (1976).

\end{thebibliography}
\end{document}